\newtheorem{Theorem}{Theorem}[section]
\newtheorem{Lemma}[Theorem]{Lemma}
\newtheorem{Proposition}[Theorem]{Proposition}
\newtheorem{Corollary}[Theorem]{Corollary}
\numberwithin{equation}{section}
\renewcommand{\phi}{\varphi}
\newcommand{\C}{\operatorname{C}}
\newcommand{\N}{\operatorname{N}}
\newcommand{\Z}{\operatorname{Z}}
\newcommand{\Aut}{\operatorname{Aut}}
\newcommand{\Inn}{\operatorname{Inn}}
\newcommand{\Out}{\operatorname{Out}}
\newcommand{\pcore}{\operatorname{O}}
\newcommand{\Irr}{\operatorname{Irr}}
\newcommand{\Bl}{\operatorname{Bl}}
\newcommand{\Gal}{\operatorname{Gal}}
\newcommand{\Syl}{\operatorname{Syl}}
\mathchardef\ordinarycolon\mathcode`\:  
\title{The Alperin-McKay Conjecture for metacyclic, minimal non-abelian defect groups}
\author{Benjamin Sambale}
\date{\today}
\begin{document}
\frenchspacing
\maketitle
\begin{abstract}\noindent
We prove the Alperin-McKay Conjecture for all $p$-blocks of finite groups with metacyclic, minimal non-abelian defect groups. These are precisely the metacyclic groups whose derived subgroup have order $p$. 
In the special case $p=3$, we also verify Alperin's Weight Conjecture for these defect groups. Moreover, in case $p=5$ we do the same for the non-abelian defect groups $C_{25}\rtimes C_{5^n}$. The proofs do \emph{not} rely on the classification of the finite simple groups.
\end{abstract}

\textbf{Keywords:} Alperin-McKay Conjecture, metacyclic defect groups\\
\textbf{AMS classification:} 20C15, 20C20

\section{Introduction}
Let $B$ be a $p$-block of a finite group $G$ with respect to an algebraically closed field of characteristic $p$. Suppose that $B$ has a metacyclic defect group $D$. 
We are interested in the number $k(B)$ (respectively $k_i(B)$) of irreducible characters of $B$ (of height $i\ge 0$), and the number $l(B)$ of irreducible Brauer characters of $B$.
If $p=2$, these invariants are well understood and the major conjectures are known to be true by work of several authors (see \cite{Brauer,Olsson,Robinsonmetac,Sambale,Erdmann,EKKS}). Thus we will focus on the case $p>2$ in the present work. 
Here at least Brauer's $k(B)$-Conjecture, Olsson's Conjecture and Brauer's Height Zero Conjecture are satisfied for $B$ (see \cite{GaoBrauer,YangOlsson,SambaleHZC}). By a result of Stancu~\cite{Stancu}, $B$ is a controlled block.
Moreover, if $D$ is a non-split extension of two cyclic groups, it is known that $B$ is nilpotent (see \cite{Dietz}). Then a result by Puig~\cite{Puig} describes the source algebra of $B$ in full detail.
Thus we may assume in the following that $D$ is a split extension of two cyclic groups. A famous theorem by Dade~\cite{Dade} handles the case where $D$ itself is cyclic by making use of Brauer trees.
The general situation is much harder -- even the case $D\cong C_3\times C_3$ is still open (see \cite{Kiyota,WatanabeSD16,KoshitaniMiyachi,KK}). 
Now consider the subcase where $D$ is non-abelian. Then a work by An~\cite{AnControlled} shows that $G$ is not a quasisimple group. On the other hand, the algebra structure of $B$ in the $p$-solvable case can be obtained from Külshammer~\cite{Kpsolv}.
If $B$ has maximal defect (i.\,e. $D\in\Syl_p(G)$), the block invariants of $B$ were determined in \cite{Gaofull}. If $B$ is the principal block, Horimoto and Watanabe \cite{WatanabePerfIso} constructed a perfect isometry between $B$ and its Brauer correspondent in $\N_G(D)$. 

Let us suppose further that $D$ is a split extension of a cyclic group and a group of order $p$ (i.\,e. $D$ is the unique non-abelian group with a cyclic subgroup of index $p$). Here the difference $k(B)-l(B)$ is known from \cite{GaoZeng}. Moreover, under additional assumptions on $G$, Holloway, Koshitani and Kunugi~\cite{Holloway} obtained the block invariants precisely. In the special case where $D$ has order $p^3$, incomplete information are given by Hendren~\cite{Hendren2}. Finally, one has full information in case $|D|=27$ by work of the present author~\cite{SambaleHZC}.

In the present work we consider the following class of non-abelian split metacyclic groups 
\begin{equation}\label{presmet}
D=\langle x,y\mid x^{p^m}=y^{p^n}=1,\ yxy^{-1}=x^{1+p^{m-1}}\rangle\cong C_{p^m}\rtimes C_{p^n}
\end{equation}
where $m\ge 2$ and $n\ge 1$. By a result of Rédei (see \cite[Aufgabe~III.7.22]{Huppert}) these are precisely the metacyclic, minimal non-abelian groups. A result by Knoche (see \cite[Aufgabe~III.7.24]{Huppert}) implies further that these are exactly the metacyclic groups with derived subgroup of order $p$.
In particular the family includes the non-abelian group with a cyclic subgroup of index $p$ mentioned above. 
The main theorem of the present paper states that $k_0(B)$ is locally determined. In particular the Alperin-McKay Conjecture holds for $B$. This improves some of the results mentioned above. We also prove that every irreducible character of $B$ has height $0$ or $1$. This is in accordance with the situation in $\Irr(D)$.
In the second part of the paper we investigate the special case $p=3$.
Here we are able to determine $k(B)$, $k_i(B)$ and $l(B)$. This gives an example of Alperin's Weight Conjecture and the Ordinary Weight Conjecture. Finally, we determine the block invariants for $p=5$ and $D\cong C_{25}\rtimes C_{5^n}$ where $n\ge 1$.

As a new ingredient (compared to \cite{SambaleHZC}) we make use of the focal subgroup of $B$.





\section{The Alperin-McKay Conjecture}

Let $p$ be an odd prime, and let $B$ be a $p$-block with split metacyclic, non-abelian defect group $D$. Then $D$ has a presentation of the form
\[D=\langle x,y\mid x^{p^m}=y^{p^n}=1,\ yxy^{-1}=x^{1+p^l}\rangle\]
where $0<l<m$ and $m-l\le n$. 
Elementary properties of $D$ are stated in the following lemma.

\begin{Lemma}\hfill
\begin{enumerate}[(i)]
\item $D'=\langle x^{p^l}\rangle\cong C_{p^{m-l}}$.
\item $\Z(D)=\langle x^{p^{m-l}}\rangle\times\langle y^{p^{m-l}}\rangle\cong C_{p^l}\times C_{p^{n-m+l}}$.
\end{enumerate}
\end{Lemma}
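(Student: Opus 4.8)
The plan is to work throughout in the semidirect product $D=\langle x\rangle\rtimes\langle y\rangle$, using the normal form in which every element is written uniquely as $x^ay^b$ with $0\le a<p^m$ and $0\le b<p^n$, and in which $\langle x\rangle\cap\langle y\rangle=1$ because the extension splits. Everything then reduces to bookkeeping of exponents of $x$ under conjugation, governed by the single relation $yxy^{-1}=x^{1+p^l}$.

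For (i), I would first observe that $D/\langle x\rangle\cong\langle y\rangle$ is cyclic, hence abelian, so $D'\le\langle x\rangle$. The defining relation gives $yxy^{-1}x^{-1}=x^{p^l}$, so $x^{p^l}\in D'$ and thus $\langle x^{p^l}\rangle\le D'$. For the reverse inclusion I would pass to $D/\langle x^{p^l}\rangle$: there $yxy^{-1}=x^{1+p^l}\equiv x$, so the images of $x$ and $y$ commute and $D/\langle x^{p^l}\rangle$ is abelian, forcing $D'\le\langle x^{p^l}\rangle$. Since $x$ has order $p^m$, the element $x^{p^l}$ has order $p^{m-l}$, giving $D'=\langle x^{p^l}\rangle\cong C_{p^{m-l}}$.

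For (ii), I would test centrality of $z=x^ay^b$ by requiring that it commute with each generator. Comparing the normal forms of $zy$ and $yz$ yields $a(1+p^l)\equiv a\pmod{p^m}$, that is $p^{m-l}\mid a$. Using the identity $y^bx=x^{(1+p^l)^b}y^b$ (an immediate induction on $b$), comparison of $zx$ and $xz$ yields $(1+p^l)^b\equiv 1\pmod{p^m}$. Hence $\Z(D)=\{x^ay^b: p^{m-l}\mid a\text{ and }(1+p^l)^b\equiv1\pmod{p^m}\}$, and the two conditions decouple, one constraining $a$ and the other $b$.

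The crux, and the only place I expect a real obstacle, is to evaluate the multiplicative order of $1+p^l$ modulo $p^m$; this is exactly where the hypothesis $p>2$ enters. For odd $p$ and $1\le l<m$ one has $v_p\big((1+p^l)^{p^k}-1\big)=l+k$ for every $k\ge0$ (a lifting-the-exponent computation), so that order equals $p^{m-l}$, and therefore $(1+p^l)^b\equiv1\pmod{p^m}$ holds iff $p^{m-l}\mid b$. Combining the two divisibility conditions gives $\Z(D)=\langle x^{p^{m-l}}\rangle\langle y^{p^{m-l}}\rangle$. Finally $x^{p^{m-l}}$ has order $p^l$ and $y^{p^{m-l}}$ has order $p^{n-m+l}$ (using $m-l\le n$), and since $\langle x\rangle\cap\langle y\rangle=1$ the product is direct, yielding $\Z(D)\cong C_{p^l}\times C_{p^{n-m+l}}$.
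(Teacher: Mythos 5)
Your proposal is correct, and since the paper simply states ``Omitted'' for this lemma, there is no competing argument to compare it with: your direct computation in the normal form $x^ay^b$ is exactly the routine verification the author is taking for granted. The one point of genuine substance -- that the multiplicative order of $1+p^l$ modulo $p^m$ is $p^{m-l}$, which rests on $v_p\bigl((1+p^l)^{p^k}-1\bigr)=l+k$ and hence on $p$ being odd -- is identified and handled correctly, so nothing is missing.
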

\begin{proof}
Omitted.
\end{proof}

We fix a Sylow subpair $(D,b_D)$ of $B$. Then the conjugation of subpairs $(Q,b_Q)\le(D,b_D)$ forms a saturated fusion system $\mathcal{F}$ on $D$ (see \cite{AKO}). Here $Q\le D$ and $b_Q$ is a uniquely determined block of $\C_G(Q)$. 
We also have subsections $(u,b_u)$ where $u\in D$ and $b_u:=b_{\langle u\rangle}$.
By \cite{Stancu}, $\mathcal{F}$ is controlled. Moreover by Theorem~2.5 in \cite{GaoBrauer} we may assume that the inertial group of $B$ has the form $\N_G(D,b_D)/\C_G(D)=\Aut_{\mathcal{F}}(D)=\langle \Inn(D),\alpha\rangle$ where $\alpha\in\Aut(D)$ such that $\alpha(x)\in\langle x\rangle$ and $\alpha(y)=y$. By a slight abuse of notation we often write $\Out_{\mathcal{F}}(D)=\langle\alpha\rangle$.
In particular the inertial index $e(B):=\lvert\Out_{\mathcal{F}}(D)\rvert$ is a divisor of $p-1$. 
Let
\[\mathfrak{foc}(B):=\langle f(a)a^{-1}: a\in Q\le D,\ f\in\Aut_{\mathcal{F}}(Q)\rangle\]
be the \emph{focal subgroup} of $B$ (or of $\mathcal{F}$). Then it is easy to see that $\mathfrak{foc}(B)\subseteq \langle x\rangle$. In case $e(B)=1$, $B$ is nilpotent and $\mathfrak{foc}(B)=D'$. Otherwise $\mathfrak{foc}(B)=\langle x\rangle$.

For the convenience of the reader we collect some estimates on the block invariants of $B$. 

\begin{Proposition}\label{inequ}
Let $B$ be as above. Then
\begin{align*}
\biggl(\frac{p^l+p^{l-1}-p^{2l-m-1}-1}{e(B)}+e(B)\biggr)p^n&\le k(B)\le\biggl(\frac{p^l-1}{e(B)}+e(B)\biggr)(p^{n+m-l-2}+p^n-p^{n-2}),\\
2p^n&\le k_0(B)\le\biggl(\frac{p^l-1}{e(B)}+e(B)\biggr)p^n,\\
\sum_{i=0}^{\infty}{p^{2i}k_i(B)}&\le\biggl(\frac{p^l-1}{e(B)}+e(B)\biggr)p^{n+m-l},\\
l(B)&\ge e(B)\mid p-1,\\
p^n\mid k_0(B),&\hspace{5mm}p^{n-m+l}\mid k_i(B) \hspace{5mm}\text{for }i\ge 1,\\
k_i(B)&=0 \hspace{5mm}\text{for }i>2(m-l).
\end{align*}
\end{Proposition}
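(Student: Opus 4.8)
The plan is to collect these estimates from the general theory of blocks with controlled fusion, using the subsection decomposition together with the structure of the local data computed above. Since $\mathcal{F}$ is controlled and $\Out_{\mathcal{F}}(D)=\langle\alpha\rangle$ is cyclic of order $e:=e(B)\mid p-1$, I would first set up the standard machinery: for each $\mathcal{F}$-conjugacy class of subsections $(u,b_u)$ with $u\in D$, the block $b_u$ has defect group $\C_D(u)$ and a known inertial quotient, and $k_0(B)$, $k(B)$ and the $k_i(B)$ decompose as sums over representatives of these classes via the contributions $l(b_u)$ (and more refined data for non-cyclic $\langle u\rangle$). Because $D'\cong C_{p^{m-l}}$ has order $p^{m-l}$, the block is ``almost nilpotent'' in the sense that the height of any character is bounded by the logarithmic order of $D'$; this is the source of the final displayed bound $k_i(B)=0$ for $i>2(m-l)$.

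The main steps, in order, are as follows. First, I would establish the two divisibility statements $p^n\mid k_0(B)$ and $p^{n-m+l}\mid k_i(B)$ for $i\ge1$. These should follow from the action of a Galois automorphism or, more structurally, from the fact that $\Z(D)=\langle x^{p^{m-l}}\rangle\times\langle y^{p^{m-l}}\rangle$ contains a large cyclic factor of order $p^n$ in the $y$-direction (cf. the Lemma): central characters of this factor partition $\Irr(B)$ into orbits whose sizes force the stated divisibilities, after using Brauer's permutation lemma and the fact that the $p'$-part of the relevant inertial action is trivial on $\Z(D)$. Second, for the inequalities I would use the subsection counting formula $k(B)=\sum_{(u,b_u)}l(b_u)$ summed over a set of representatives, bound $l(b_u)$ below by the inertial index of $b_u$ and above by $k(\overline{b_u})$ for a suitable quotient, and then count the subsections explicitly from the conjugacy classes of $D$ under $\langle\alpha\rangle$. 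The lower bound $2p^n\le k_0(B)$ and $l(B)\ge e(B)$ come from the trivial subsection $(1,B)$ contributing $l(B)\ge e(B)$ together with the nontrivial central subsections supported on $\langle y\rangle$, each of which is nilpotent or has small inertial index and hence contributes a full $p^n$ to $k_0$. Third, the weighted sum $\sum p^{2i}k_i(B)$ is controlled by the total ``mass'' $\sum_{(u,b_u)} k(b_u)$ evaluated over the central subsections, which is what produces the factor $p^{n+m-l}=|Z(D)|\cdot p^{\,?}$ on the right.

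For the height bound $k_i(B)=0$ for $i>2(m-l)$, the cleanest route is to invoke that $B$ is controlled with abelianized defect quotient $D/D'$ of order $p^{m+n-(m-l)}$, so every irreducible character height is at most $\log_p|D'|$ by the height-zero philosophy for the nilpotent covering; more precisely, since $D/\mathfrak{foc}(B)$ has a cyclic quotient and $\mathfrak{foc}(B)\le\langle x\rangle$, the heights are governed by $D'$ and its square, giving the bound $2(m-l)$ rather than $m-l$ because the relevant Cartan invariants involve products of two character degrees over the cyclic $D'$.

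The hard part, I expect, will be pinning down the exact numerical constants in the $k(B)$ and $\sum p^{2i}k_i(B)$ inequalities rather than the qualitative shape. Getting the lower bound on $k(B)$ with the term $p^{2l-m-1}$ requires an honest count of the $\langle\alpha\rangle$-orbits on the non-central subsections sitting inside $\langle x\rangle$, where the orbit lengths depend delicately on the $p$-adic valuation of $u$ and on how $\alpha$ acts on each layer $\langle x^{p^k}\rangle/\langle x^{p^{k+1}}\rangle$; similarly the upper bounds require the sharp estimate $l(b_u)\le k(b_u)$ for the local blocks together with the precise defect $|\C_D(u)|$. Most of these reduce to results already in the literature (\cite{GaoBrauer,GaoZeng,YangOlsson,SambaleHZC}) applied subsection-by-subsection, so I would assemble the proposition by quoting those bounds for each subsection type and then summing, with the orbit-counting lemma for $\langle\alpha\rangle$ acting on $D$ as the one genuinely new computation.
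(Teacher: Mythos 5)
Your plan to assemble most of the inequalities by citing the literature subsection-by-subsection is broadly consistent with the paper: its proof simply quotes Proposition~2.1 through Corollary~2.5 of \cite{SambaleHZC} for the bounds on $k(B)$, $\sum p^{2i}k_i(B)$, $l(B)\ge e(B)$ and the height bound $k_i(B)=0$ for $i>2(m-l)$. However, the two statements the paper actually has to prove afresh --- the divisibilities and the lower bound $2p^n\le k_0(B)$ --- are exactly the places where your proposed mechanisms fail. First, your source for $p^n\mid k_0(B)$ is a ``cyclic factor of order $p^n$ in the $y$-direction'' of $\Z(D)$; no such factor exists, since $\Z(D)=\langle x^{p^{m-l}}\rangle\times\langle y^{p^{m-l}}\rangle\cong C_{p^l}\times C_{p^{n-m+l}}$ and $n-m+l<n$. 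The correct tool (and the ``new ingredient'' advertised in the introduction) is the focal subgroup: since $\mathfrak{foc}(B)\subseteq\langle x\rangle$, Theorem~1 of \cite{RobinsonFocal} gives $p^n\mid|D:\mathfrak{foc}(B)|\mid k_0(B)$, and Theorem~2 of \cite{RobinsonFocal} gives $p^{n-m+l}\mid\lvert\Z(D):\Z(D)\cap\mathfrak{foc}(B)\rvert\mid k_i(B)$ for $i\ge 1$. A Galois/orbit argument on central characters of $\Z(D)$ cannot produce a modulus as large as $p^n$.

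Second, the bound $2p^n\le k_0(B)$ cannot be obtained the way you suggest. The subsection decomposition computes $k(B)=\sum_u l(b_u)$; there is no counting formula in which a non-trivial subsection ``contributes a full $p^n$ to $k_0$'', so that step has no basis. The paper's argument is: $p^n\mid k_0(B)$ forces $k_0(B)\ge p^n$, and if equality held, then $k_0(B)=|D:\mathfrak{foc}(B)|$ and the characterisation of nilpotent blocks by Kessar--Linckelmann--Navarro \cite{KLN} would make $B$ nilpotent; but a nilpotent block has $k_0(B)=|D:D'|=p^{n+l}>p^n$, a contradiction, whence $k_0(B)\ge 2p^n$. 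This use of \cite{KLN} is indispensable (the statement is genuinely stronger than anything subsection counting yields), and it is absent from your proposal. The remaining heuristic you offer for $k_i(B)=0$ when $i>2(m-l)$ (``heights governed by $D'$ and its square'') is not a proof either, but since the paper itself only cites \cite{SambaleHZC} there, that part is acceptable as a citation.
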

\begin{proof}
Most of the inequalities are contained in Proposition~2.1 to Corollary~2.5 in \cite{SambaleHZC}. By Theorem~1 in \cite{RobinsonFocal} we have $p^n\mid|D:\mathfrak{foc}(B)|\mid k_0(B)$. In particular $p^n\le k_0(B)$. In case $k_0(B)=p^n$ it follows from \cite{KLN} that $B$ is nilpotent. However then we would have $k_0(B)=|D:D'|=p^{n+l}>p^n$. Therefore $2p^n\le k_0(B)$. Theorem~2 in \cite{RobinsonFocal} implies $p^{n-m+l}\mid\lvert\Z(D):\Z(D)\cap\mathfrak{foc}(B)\rvert\mid k_i(B)$ for $i\ge 1$.
\end{proof}

Now we consider the special case where $m=l+1$. As mentioned in the introduction these are precisely the metacyclic, minimal non-abelian groups. We prove the main theorem of this section.

\begin{Theorem}\label{AMC}
Let $B$ be a $p$-block of a finite group with metacyclic, minimal non-abelian defect groups for an odd prime $p$. Then 
\[k_0(B)=\biggl(\frac{p^{m-1}-1}{e(B)}+e(B)\biggr)p^n\]
with the notation from \eqref{presmet}. In particular the Alperin-McKay Conjecture holds for $B$.
\end{Theorem}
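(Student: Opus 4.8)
The plan is to trap $k_0(B)$ between the upper bound already recorded in Proposition~\ref{inequ} and a matching lower bound, and to identify the common value with $k_0(b)$ for the Brauer correspondent $b$ of $B$ in $\N_G(D)$; this last identity is exactly the Alperin--McKay Conjecture. Setting $l=m-1$ in Proposition~\ref{inequ} gives at once
\[
k_0(B)\le\biggl(\frac{p^{m-1}-1}{e(B)}+e(B)\biggr)p^n,
\]
so the entire difficulty lies in the reverse inequality. Write $e:=e(B)$ and $E=\Out_{\mathcal F}(D)=\langle\alpha\rangle$, a cyclic $p'$-group of order $e$ with $\alpha(x)=x^r$, $\alpha(y)=y$, where $r$ is a primitive $e$-th root of unity modulo $p$. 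When $e=1$ the block is nilpotent and $k_0(B)=|D:D'|=p^{m-1+n}$ already agrees with the claimed formula, so I will assume $e>1$.

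For the local side I would compute $k_0(b)$ directly. Since $D\trianglelefteq\N_G(D)$ and $e$ is prime to $p$, the obstruction $\cohom^2(E,k^\times)$ vanishes for the cyclic group $E$, so by the theory of blocks with normal defect group (see~\cite{Kpsolv}) the block $b$ is Morita equivalent to the relevant block of $D\rtimes E$; in particular $k_0(b)=k_0(D\rtimes E)$. A character of $D\rtimes E$ has height $0$ precisely when it has $p'$-degree, and since every irreducible character of $D$ has degree $1$ or $p$ (as $|D:\Z(D)|=p^2$ by the Lemma), this happens exactly when the character is trivial on the $E$-invariant subgroup $D'$. Hence $k_0(D\rtimes E)=k\bigl((D/D')\rtimes E\bigr)$. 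Now $D/D'\cong C_{p^{m-1}}\times C_{p^n}$ with $E$ acting faithfully on the first factor and trivially on the second, so
\[
(D/D')\rtimes E\cong\bigl(C_{p^{m-1}}\rtimes E\bigr)\times C_{p^n},
\]
and $C_{p^{m-1}}\rtimes E$ is a Frobenius group because the faithful $p'$-action of $E$ is fixed-point-free on $C_{p^{m-1}}$. Counting conjugacy classes of a Frobenius group gives $k\bigl(C_{p^{m-1}}\rtimes E\bigr)=\frac{p^{m-1}-1}{e}+e$, and multiplying by $k(C_{p^n})=p^n$ yields $k_0(b)=\bigl(\frac{p^{m-1}-1}{e}+e\bigr)p^n$, the asserted value.

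For the global lower bound I would exploit the subsection $(x,b_x)$. A short calculation with $yxy^{-1}=x^{1+p^{m-1}}$ shows $y^p\in\C_D(x)$, so $\C_D(x)=\langle x,y^p\rangle\cong C_{p^m}\times C_{p^{n-1}}$ is abelian; moreover $\alpha$ moves $x$ (since $r\not\equiv1\bmod p$), so the inertial quotient of $b_x$ is trivial and $b_x$ is nilpotent. By Puig~\cite{Puig} the Cartan matrix of $b_x$ is the $1\times1$ matrix $(|\C_D(x)|)=(p^{m+n-1})$, whence the single column of generalized decomposition numbers satisfies
\[
\sum_{\chi\in\Irr(B)}\lvert d^x_{\chi}\rvert^2=p^{m+n-1}.
\]
The entries $d^x_\chi$ lie in $\mathbb Z[\zeta_{p^m}]$, where $\zeta_{p^m}$ is a primitive $p^m$-th root of unity, and the heights of the characters are reflected in the $p$-adic valuations of these numbers through the usual contribution formula for $(x,b_x)$. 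Running this analysis simultaneously over the Galois-conjugate subsections $(x^i,b_{x^i})$ with $p\nmid i$, and combining the resulting orthogonality relations with the divisibility $p^n\mid k_0(B)$ from Proposition~\ref{inequ}, I expect to pin the number of height-zero characters down to the upper bound.

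The main obstacle is precisely this last step: extracting the exact count $k_0(B)=\bigl(\frac{p^{m-1}-1}{e}+e\bigr)p^n$ from the generalized decomposition numbers. One must control both the shape of each column $d^x_\chi$ as a sum of $p^m$-th roots of unity and the interaction between the various subsections, and simultaneously rule out characters of height $\ge2$ so that the weighted estimate $\sum_i p^{2i}k_i(B)\le\bigl(\frac{p^{m-1}-1}{e}+e\bigr)p^{n+1}$ of Proposition~\ref{inequ} becomes sharp. Once $k_0(B)=\bigl(\frac{p^{m-1}-1}{e}+e\bigr)p^n=k_0(b)$ is established, the Alperin--McKay Conjecture for $B$ follows immediately.
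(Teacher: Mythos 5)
Your upper bound and your computation of the local value are fine: Proposition~\ref{inequ} with $l=m-1$ gives $k_0(B)\le\bigl(\frac{p^{m-1}-1}{e(B)}+e(B)\bigr)p^n$, and your Clifford-theoretic count for the Brauer correspondent (Külshammer's structure theorem, vanishing of the cocycle for cyclic $E$, Ito plus the Frobenius class count) correctly yields $k_0(b)=\bigl(\frac{p^{m-1}-1}{e}+e\bigr)p^n$, modulo the routine remark that the equivalence involved preserves heights. But the global lower bound $k_0(B)\ge\bigl(\frac{p^{m-1}-1}{e}+e\bigr)p^n$ is the entire content of the theorem, and there your proposal stops at a declaration of intent: you say you ``expect'' the analysis of the generalized decomposition numbers $d^{x^i}_{\chi}$ to pin down $k_0(B)$, and you yourself name this step as ``the main obstacle.'' That is a genuine gap, not a detail: nothing in your sketch controls the shape of the columns $d^x_\chi$ as sums of $p^m$-th roots of unity, rules out heights $\ge 2$, or converts the orthogonality relation $\sum_\chi|d^x_\chi|^2=p^{m+n-1}$ into a count of height-zero characters. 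As written, the theorem is not proved.

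The idea you are missing is that no decomposition-number analysis is needed: Proposition~\ref{inequ} already contains enough to force the lower bound by pure arithmetic. Since $p^n\mid k_0(B)$, if $k_0(B)$ falls short of the claimed value then
\[k_0(B)\le\biggl(\frac{p^{m-1}-1}{e(B)}+e(B)-1\biggr)p^n,\]
while $k(B)\ge\bigl(\frac{p^{m-1}+p^{m-2}-p^{m-3}-1}{e(B)}+e(B)\bigr)p^n$. Every character of positive height contributes at least $p^2$ to $\sum_{i\ge 0}p^{2i}k_i(B)$, so this sum is at least $k_0(B)+p^2\bigl(k(B)-k_0(B)\bigr)\ge\bigl(\frac{p^m-1}{e(B)}+p^2+e(B)-1\bigr)p^n$; on the other hand Proposition~\ref{inequ} bounds it above by $\bigl(\frac{p^{m-1}-1}{e(B)}+e(B)\bigr)p^{n+1}<\bigl(\frac{p^m-1}{e(B)}+p^2\bigr)p^n$, a contradiction since $e(B)\ge 1$. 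This sandwich also makes your explicit local computation unnecessary for the Alperin--McKay statement: the Brauer correspondent in $\N_G(D)$ has the same fusion system, hence the same $e$, hence the same value of $k_0$ by the very formula just proved.
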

\begin{proof}
By Proposition~\ref{inequ} we have 
\[p^n\mid k_0(B)\le \biggl(\frac{p^{m-1}-1}{e(B)}+e(B)\biggr)p^n.\]
Thus, by way of contradiction we may assume that
\[k_0(B)\le\biggl(\frac{p^{m-1}-1}{e(B)}+e(B)-1\biggr)p^n.\]
We also have
\[k(B)\ge \biggl(\frac{p^{m-1}+p^{m-2}-p^{m-3}-1}{e(B)}+e(B)\biggr)p^n\] 
from Proposition~\ref{inequ}. Hence the sum $\sum_{i=0}^{\infty}{p^{2i}k_i(B)}$ will be small if $k_0(B)$ is large and $k_1(B)=k(B)-k_0(B)$. 
This implies the following contradiction
\begin{align*}
\biggl(\frac{p^m-1}{e(B)}+p^2+e(B)-1\biggr)p^n&=\biggl(\frac{p^{m-1}-1}{e(B)}+e(B)-1\biggr)p^n+\biggl(\frac{p^{m-2}-p^{m-3}}{e(B)}+1\biggr)p^{n+2}\\
&\le\sum_{i=0}^{\infty}{p^{2i}k_i(B)}\le\biggl(\frac{p^m-p}{e(B)}+pe(B)\biggr)p^n<\biggl(\frac{p^m-1}{e(B)}+p^2\biggr)p^n.
\end{align*}
Since the Brauer correspondent of $B$ in $\N_G(D)$ has the same fusion system, the Alperin-McKay Conjecture follows.
\end{proof}

Isaacs and Navarro~\cite[Conjecture~D]{IsaacsNavarro} proposed a refinement of the Alperin-McKay Conjecture by invoking Galois automorphisms. We show (as a improvement of Theorem~4.3 in \cite{SambaleHZC}) that this conjecture holds in the special case $|D|=p^3$ of Theorem~\ref{AMC}. We will denote the subset of $\Irr(B)$ of characters of height $0$ by $\Irr_0(B)$.

\begin{Corollary}\label{IN}
Let $B$ be a $p$-block of a finite group $G$ with non-abelian, metacyclic defect group of order $p^3$. Then Conjecture~D in \cite{IsaacsNavarro} holds for $B$.
\end{Corollary}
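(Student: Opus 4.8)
The plan is to reduce Conjecture~D to an explicit comparison between the height-zero characters of $B$ and those of its Brauer correspondent $b$ in $\N_G(D)$, following the strategy used for $p=3$ in \cite[Theorem~4.3]{SambaleHZC}; the decisive new input is the exact value of $k_0(B)$ furnished by Theorem~\ref{AMC}, now available for every odd prime $p$. Here $|D|=p^3$ forces $m=2$, $n=1$, $l=1$ in the presentation \eqref{presmet}, so that $D=C_{p^2}\rtimes C_p$ with $D'=\Z(D)\cong C_p$, and $e:=e(B)$ divides $p-1$. Theorem~\ref{AMC} gives
\[k_0(B)=\Bigl(\frac{p-1}{e}+e\Bigr)p.\]

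First I would analyse the local side. Since $D\trianglelefteq\N_G(D)$, the block $b$ has normal defect group and inertial quotient $E=\Out_{\mathcal{F}}(D)$, cyclic of order $e\mid p-1$, so that the cohomological obstruction $\cohom^2(E,k^\times)$ vanishes. By the theory of blocks with a normal defect group (Külshammer, Broué), $b$ is then isotypic to $L:=D\rtimes E$, and $\Irr(b)$ is obtained from the $E$-orbits on $\Irr(D)$ by ordinary Clifford theory. This makes $\Irr_0(b)$ — together with the $p'$-parts of the degrees modulo $p$, the fields of values, and the action of the Isaacs--Navarro Galois group $\mathcal{H}$ of \cite{IsaacsNavarro} — completely explicit, and I would record the resulting multiset of $\mathcal{H}$-orbits of $\Irr_0(b)$ together with the degree congruence class (up to sign) of each orbit.

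Next I would transfer this information to $B$ itself. Because $\mathcal{F}$ is controlled, the $\mathcal{F}$-classes of subsections $(u,b_u)$ are indexed by the $E$-orbits on $D$; for $u\notin\Z(D)$ the local block $b_u$ has abelian defect group $\C_D(u)\in\{C_{p^2},\,C_p\times C_p\}$, whereas for $1\neq u\in\Z(D)$ the subsection $(u,b_u)$ is major with defect group $D$. The field of values of each $\chi\in\Irr_0(B)$ is governed by its generalized decomposition numbers $d^u_{\chi\phi}$, and Broué's description of the Galois action shows that an element of $\mathcal{H}$ sending the $p$-power root $\zeta\mapsto\zeta^t$ permutes the subsections via $u\mapsto u^t$ and acts on the $d^u_{\chi\phi}$ accordingly. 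Combining this with the exact count from Theorem~\ref{AMC} and the divisibility $p^n\mid k_0(B)$ from Proposition~\ref{inequ}, the partition of $\Irr_0(B)$ into $\mathcal{H}$-orbits of each degree congruence class is forced to coincide with the one computed for $b$, whence a bijection with the required equivariance and degree properties can be read off. Since $b$ and $B$ share the fusion system, this settles Conjecture~D.

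The hard part will be this last matching step: Theorem~\ref{AMC} only supplies the cardinality $k_0(B)$, so upgrading the resulting counting bijection to one that is simultaneously $\mathcal{H}$-equivariant and compatible with the congruences $\chi(1)_{p'}\equiv\pm\chi^*(1)_{p'}\pmod p$ requires controlling the precise cyclotomic field of values of each height-zero character of $B$, and in particular resolving the sign ambiguity. This is where the generalized decomposition matrix with respect to the major subsection $(z,b_z)$, $z\in\Z(D)\setminus\{1\}$, whose local block again has defect group $D$, must be pinned down. The cyclicity of $E$ and the fact that $\mathfrak{foc}(B)=\langle x\rangle$ confine the numbers $d^z_{\chi\phi}$ to $\mathbb{Q}(\zeta_{p^2})$ and keep the computation tractable, so I expect the obstacle to be surmountable by the same explicit analysis that handled $|D|=27$.
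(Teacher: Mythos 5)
Your proposal does not reach a proof: the step you yourself flag as ``the hard part'' is exactly the content of the corollary. Your claim that the exact value of $k_0(B)$ from Theorem~\ref{AMC} \emph{forces} the partition of $\Irr_0(B)$ into $\mathcal{H}$-orbits (refined by degree congruences) to coincide with the partition computed for the Brauer correspondent $b$ is unjustified: equality of cardinalities never forces two Galois actions to have matching orbit structures, and determining the orbit structure on the block side is precisely what has to be proved. You defer this to an unspecified explicit analysis of the generalized decomposition matrix of the major subsection and only express the expectation that it works out as in the case $|D|=27$. So what you have is a plan whose decisive step is missing, not a proof; moreover the explicit route you sketch (fields of values, sign ambiguities, matching congruence classes) is considerably heavier than what is actually needed.

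The paper's argument supplies the missing idea and avoids all explicit computation. It reduces Conjecture~D to showing that every $p$-element $\gamma$ of $\mathcal{G}=\Gal(\mathbb{Q}_{|G|}|\mathbb{Q}_{|G|_{p'}})$ acts trivially on $\Irr_0(B)$; the numerical input is Theorem~\ref{AMC}, and the local block is covered by the same reasoning because it has the same fusion system. By Lemma~IV.6.10 in \cite{Feit}, the action of $\gamma$ on characters is controlled by its action $(u,b_u)\mapsto(u^{\overline{\gamma}},b_u)$ on the $\mathcal{F}$-conjugacy classes of subsections. Since $\gamma$ is a $p$-element, $\overline{\gamma}\equiv 1\pmod{p}$, so only subsections with $|\langle u\rangle|=p^2$ could possibly move; but for such $u$ the twist $u\mapsto u^{\overline{\gamma}}$ is realized by $D$-conjugation, because in the non-abelian group of order $p^3$ the relation $yxy^{-1}=x^{1+p}$ lets inner automorphisms produce every map $u\mapsto u^{1+kp}$ on a cyclic subgroup of order $p^2$. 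Hence every subsection class is fixed and the Galois action on $\Irr_0(B)$ is trivial. This purely group-theoretic observation replaces the entire matching step your proposal leaves open.
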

\begin{proof}
Let $D$ be a defect group of $B$. 
For $k\in\mathbb{N}$, let $\mathbb{Q}_k$ be the cyclotomic field of degree $k$. Let $|G|_{p'}$ be the $p'$-part of the order of $G$. It is well-known that the Galois group $\mathcal{G}:=\Gal(\mathbb{Q}_{|G|}|\mathbb{Q}_{|G|_{p'}})$ acts canonically on $\Irr(B)$.
Let $\gamma\in\mathcal{G}$ be a $p$-element. Then it suffices to show that $\gamma$ acts trivially on $\Irr_0(B)$. 
By Lemma~IV.6.10 in \cite{Feit} it is enough to prove that $\gamma$ acts trivially on the $\mathcal{F}$-conjugacy classes of subsections of $B$ via $^\gamma(u,b_u):=(u^{\overline{\gamma}},b_u)$ where $u\in D$ and $\overline{\gamma}\in\mathbb{Z}$. Since $\gamma$ is a $p$-element, this action is certainly trivial unless $|\langle u\rangle|=p^2$. Here however, the action of $\gamma$ on $\langle u\rangle$ is just the $D$-conjugation. The result follows.
\end{proof}

In the situation of Corollary~\ref{IN} one can say a bit more: By Proposition~3.3 in \cite{SambaleHZC}, $\Irr(B)$ splits into the following families of $p$-conjugate characters:
\begin{itemize}
\item $(p-1)/e(B)+e(B)$ orbits of length $p-1$,
\item two orbits of length $(p-1)/e(B)$,
\item at least $e(B)$ $p$-rational characters.
\end{itemize}
Without loss of generality, let $e(B)>1$. By Theorem~4.1 in \cite{SambaleHZC} we have $k_1(B)\le (p-1)/e(B)+e(B)-1$. Moreover, Proposition~4.1 of the same paper implies $k_1(B)<p-1$.
In particular, all orbits of length $p-1$ of $p$-conjugate characters must lie in $\Irr_0(B)$. In case $e(B)=p-1$ the remaining $(p-1)/e(B)+e(B)$ characters in $\Irr_0(B)$ must be $p$-rational. Now let $e(B)<\sqrt{p-1}$. Then it is easy to see that $\Irr_0(B)$ contains just one orbit of length $(p-1)/e(B)$ of $p$-conjugate characters. Unfortunately, it is not clear if this also holds for $e(B)\ge\sqrt{p-1}$.

Next we improve the bound coming from Proposition~\ref{inequ} on the heights of characters.

\begin{Proposition}\label{k2}
Let $B$ be a $p$-block of a finite group with metacyclic, minimal non-abelian defect groups. Then $k_1(B)=k(B)-k_0(B)$. In particular, $B$ satisfies the following conjectures:
\begin{itemize}
\item Eaton's Conjecture \cite{Eaton}
\item Eaton-Moretó Conjecture \cite{EatonMoreto}
\item Robinson's Conjecture \cite[Conjecture 4.14.7]{LuxPahlings} 
\item Malle-Navarro Conjecture \cite{MalleNavarro}
\end{itemize}
\end{Proposition}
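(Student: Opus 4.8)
The plan is to reduce the claim $k_1(B)=k(B)-k_0(B)$ to showing that $k_2(B)=0$. Since $m=l+1$ gives $m-l=1$, Proposition~\ref{inequ} already yields $k_i(B)=0$ for all $i>2(m-l)=2$; hence $k(B)=k_0(B)+k_1(B)+k_2(B)$, and it suffices to prove that the last summand vanishes. I would feed in four facts from the preceding results: the exact value $k_0(B)=A\,p^n$ with $A:=\frac{p^{m-1}-1}{e(B)}+e(B)$ from Theorem~\ref{AMC}; the weighted bound $\sum_{i\ge0}p^{2i}k_i(B)\le A\,p^{n+1}$ (Proposition~\ref{inequ}, using $l=m-1$); the lower bound $k(B)\ge\bigl(A+\frac{p^{m-2}-p^{m-3}}{e(B)}\bigr)p^n$; and the divisibility $p^{n-1}\mid k_i(B)$ for $i\ge1$, all recorded in Proposition~\ref{inequ}.

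Writing $k_1(B)=p^{n-1}a$ and $k_2(B)=p^{n-1}b$ with integers $a,b\ge0$, I would translate the lower bound on $k(B)$ into
\[a+b\ \ge\ \frac{p^{m-2}(p-1)}{e(B)},\]
and the weighted upper bound, after subtracting $k_0(B)$ and dividing by $p^n$, into
\[p\,a+p^3 b\ \le\ (p-1)A.\]
Setting $E:=e(B)$ and substituting the first inequality in the form $p\,a\ge \frac{p^{m-1}(p-1)}{E}-p\,b$ into the second, the two $p^{m-1}$-terms cancel almost completely, leaving
\[(p^3-p)\,b\ \le\ (p-1)\Bigl(E-\tfrac1E\Bigr),\qquad\text{equivalently}\qquad p(p+1)\,b\ \le\ E-\tfrac1E.\]
Since $E\mid p-1$ forces $E-\tfrac1E<p-1<p(p+1)$, any $b\ge1$ is impossible; hence $b=0$, i.e. $k_2(B)=0$ and $k_1(B)=k(B)-k_0(B)$. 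The argument is uniform in $m\ge2$ and covers the nilpotent case $E=1$, where the right-hand side is simply $0$.

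The step I expect to be decisive, and the only genuinely delicate one, is the cancellation of the $p^{m-1}$-terms: it is what makes a crude pair of one-sided estimates pin down $k_2(B)$ exactly. It works because the height-$2$ characters are weighted by $p^4$ in $\sum p^{2i}k_i(B)$ while contributing the same $p^{n-1}$-divisible increment as height-$1$ characters; the gap of $p^2$ between the two weights, measured against the narrow window $(p-1)(E-\tfrac1E)$ left after cancellation, is exactly what excludes them. Everything else is bookkeeping with the inequalities of Proposition~\ref{inequ} and the exact value from Theorem~\ref{AMC}. This mirrors, but sharpens, the balancing argument already used in the proof of Theorem~\ref{AMC}.

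Finally, the four named conjectures follow once the height distribution of $B$ is known to consist only of heights $0$ and $1$. With $k_i(B)=0$ for $i\ge2$, the set of heights occurring in $\Irr(B)$ matches that occurring in $\Irr(D)$, whose nonlinear characters all have degree $p$ because $|D'|=p$. Combining this with the exact $k_0(B)$ from Theorem~\ref{AMC} and the bounds of Proposition~\ref{inequ}, I would verify each of Eaton's, the Eaton--Moret\'o, Robinson's and the Malle--Navarro inequalities by direct comparison, handling the four statements one by one as a short check rather than as a new argument.
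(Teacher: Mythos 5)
Your proposal is correct and follows essentially the same route as the paper's proof: both combine the exact value of $k_0(B)$ from Theorem~\ref{AMC} with the lower bound on $k(B)$, the weighted bound $\sum_{i\ge 0}p^{2i}k_i(B)\le\bigl(\frac{p^{m-1}-1}{e(B)}+e(B)\bigr)p^{n+1}$ and the divisibility $p^{n-1}\mid k_i(B)$ from Proposition~\ref{inequ}, and the decisive cancellation of the $p^{m-1}$-terms you highlight is exactly the cancellation that produces the paper's contradiction $p^{n+3}-p^{n+1}\le\bigl(\frac{1-p}{e(B)}+e(B)(p-1)\bigr)p^n\le p^{n+2}$. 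The only cosmetic differences are that the paper argues by contradiction for an arbitrary height $i\ge 2$ (so it never needs the vanishing $k_i(B)=0$ for $i>2(m-l)$), and that its verification of the four conjectures is the same kind of short check you defer to (Eaton's via the known Brauer and Olsson conjectures for metacyclic defect groups, Malle--Navarro via computing $k(D)=p^{n+m-1}+p^{n+m-2}-p^{n+m-3}$).
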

\begin{proof}
By \cite{Sambale} we may assume $p>2$ as before.
By way of contradiction suppose that $k_i(B)>0$ for some $i\ge 2$. Since
\[k(B)\ge\biggl(\frac{p^{m-1}+p^{m-2}-p^{m-3}-1}{e(B)}+e(B)\biggr)p^n,\]
we have $k(B)-k_0(B)\ge (p^{m-1}-p^{m-2})p^{n-1}/e(B)$ by Theorem~\ref{AMC}. By Proposition~\ref{inequ}, $k_1(B)$ and $k_i(B)$ are divisible by $p^{n-1}$. This shows
\[\biggl(\frac{p^{m-1}-1}{e(B)}+e(B)\biggr)p^n+\biggl(\frac{p^{m-1}-p^{m-2}}{e(B)}-1\biggr)p^{n+1}+p^{n+3}\le\sum_{i=0}^{\infty}{p^{2i}k_i(B)}\le \biggl(\frac{p^{m-1}-1}{e(B)}+e(B)\biggr)p^{n+1}.\]
Hence, we derive the following contradiction
\[p^{n+3}-p^{n+1}\le\biggl(\frac{1-p}{e(B)}+e(B)(p-1)\biggr)p^n\le p^{n+2}.\]
This shows $k_1(B)=k(B)-k_0(B)$. Now Eaton's Conjecture is equivalent to Brauer's $k(B)$-Conjecture and Olsson's Conjecture. Both are known to hold for all metacyclic defect groups. Also the Eaton-Moretó Conjecture and Robinson's Conjecture is trivially satisfied for $B$. The Malle-Navarro Conjecture asserts that $k(B)/k_0(B)\le k(D')=p$ and $k(B)/l(B)\le k(D)$. By Theorem~\ref{AMC} and Proposition~\ref{inequ}, the first inequality reduces to $p^{n-1}+p^n-p^{n-2}\le p^{n+1}$ which is true. For the second inequality we observe that every conjugacy class of $D$ has at most $p$ elements, since $|D:\Z(D)|=p^2$. 
Hence, $k(D)=\lvert\Z(D)\rvert+\frac{|D|-\lvert\Z(D)\rvert}{p}=p^{n+m-1}+p^{n+m-2}-p^{n+m-3}$.
Now Proposition~\ref{inequ} gives
\[\frac{k(B)}{l(B)}\le k(B)\le\biggl(\frac{p^{m-1}-1}{e(B)}+e(B)\biggr)(p^{n-1}+p^n-p^{n-2})\le p^{n+m-1}+p^{n+m-2}-p^{n+m-3}=k(D).\qedhere\]
\end{proof}

We use the opportunity to present a result for $p=3$ and a different class of metacyclic defect groups (where $l=1$ with the notation above).

\begin{Theorem}
Let $B$ be a $3$-block of a finite group $G$ with defect group
\[D=\langle x,y\mid x^{3^m}=y^{3^n}=1,\ yxy^{-1}=x^4\rangle\]
where $2\le m\le n+1$. Then $k_0(B)=3^{n+1}$. In particular, the Alperin-McKay Conjecture holds for $B$.
\end{Theorem}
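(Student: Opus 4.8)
The plan is to exploit that $e(B)=\lvert\Out_{\mathcal F}(D)\rvert$ divides $p-1=2$, so $e(B)\in\{1,2\}$, and that here $l=1$, whence $D'=\langle x^3\rangle\cong C_{3^{m-1}}$ and $|D:D'|=3^{n+1}$. Proposition~\ref{inequ} already yields $2\cdot 3^n\le k_0(B)\le(\tfrac{2}{e(B)}+e(B))3^n=3^{n+1}$ together with $3^n\mid k_0(B)$, so $k_0(B)\in\{2\cdot 3^n,\,3^{n+1}\}$ and it remains only to exclude the value $2\cdot 3^n$. If $e(B)=1$ then $B$ is nilpotent and $k_0(B)=|D:D'|=3^{n+1}$, so this case is immediate. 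If $m=2$ then $D$ is metacyclic minimal non-abelian (the case $m=l+1$) and Theorem~\ref{AMC} applies verbatim, giving $(\tfrac{3-1}{e(B)}+e(B))3^n=3^{n+1}$. Hence the substantial case is $e(B)=2$ and $m\ge 3$, with $\alpha$ the unique involution in $\Out_{\mathcal F}(D)$, acting by $x\mapsto x^{-1}$, $y\mapsto y$.

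For this case the weighted-sum argument driving Theorem~\ref{AMC} no longer suffices: since $l=1$ one has $|D:\Z(D)|=3^{2m-2}>p^2$ as soon as $m\ge 3$, so the budget $\sum_i p^{2i}k_i(B)\le 3^{n+m}$ can be absorbed by characters of height as large as $m-1$ rather than being forced onto height $1$, and the lossy bound $\sum_i p^{2i}k_i(B)\ge k_0(B)+p^2(k(B)-k_0(B))$ no longer separates the two candidate values of $k_0(B)$; moreover the $k(B)$-lower bound of Proposition~\ref{inequ} does not grow with $m$ for $l=1$. The plan is therefore to determine the entire height distribution rather than to estimate it. I would run over the $\mathcal F$-classes of subsections $(u,b_u)$ and use $k(B)=\sum_{(u,b_u)}l(b_u)$: for non-central $u$ the centraliser $\C_D(u)$ is a proper subgroup (for instance $\C_D(x)=\langle x\rangle\times\langle y^{3^{m-1}}\rangle$ is abelian), so $b_u$ has either an abelian defect group or a smaller metacyclic defect group of the same family, whose invariants are known, the latter by induction on $m$. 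The heights in $B$ would then be recovered from the generalised decomposition numbers with respect to the central subsection $(z,b_z)$ with $z:=x^{3^{m-1}}\in\Z(D)$ of order $3$, together with the Cartan matrix of $b_z$ and the $\langle\alpha\rangle$-action.

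That the target value is indeed $3^{n+1}$, and that it depends only on $(D,\mathcal F)$, is confirmed by the model computation in $D\rtimes\langle\alpha\rangle$ (the configuration of the Brauer correspondent, which shares $D$ and $\mathcal F$): on $D/D'\cong C_3\times C_{3^n}$ the automorphism $\alpha$ inverts the image of $x$ and fixes that of $y$, so of the $3^{n+1}$ linear characters of $D$ exactly $3^n$ are $\alpha$-fixed while the remaining $2\cdot 3^n$ form $3^n$ orbits of length $2$; Clifford theory then produces $2\cdot 3^n+3^n=3^{n+1}$ characters of height $0$. Because this count is fusion-local, establishing $k_0(B)=3^{n+1}$ for one block with this defect data settles the Alperin-McKay Conjecture at once, exactly as in Theorem~\ref{AMC}. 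The hard part will be the step flagged above, namely controlling the characters of height $\ge 2$ through the precise Cartan matrices of the local blocks $b_u$, since the elementary estimates of Proposition~\ref{inequ} pin down $k_0(B)$ only when $m=2$; once the height distribution is determined, $k_0(B)=3^{n+1}$ follows.
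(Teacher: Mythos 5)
Your reductions are all correct: $k_0(B)\in\{2\cdot 3^n,3^{n+1}\}$ from Proposition~\ref{inequ}, the nilpotent case $e(B)=1$, the case $m=2$ via Theorem~\ref{AMC}, and the model computation in $D\rtimes\langle\alpha\rangle$ confirming the target value. But the entire content of the theorem is the step you leave as a plan --- excluding $k_0(B)=2\cdot 3^n$ when $e(B)=2$ and $m\ge 3$ --- and that plan has two concrete defects. First, the subsection formula $k(B)=\sum_{(u,b_u)}l(b_u)$ includes the term $l(B)$ itself and the major subsections $u\in\Z(D)\setminus\{1\}$; for these $\C_D(u)=D$, so $b_u$ has defect group $D$ (or, after domination, a group of the same family with the \emph{same} $m$), and your induction on $m$ gives no information about exactly these terms, which are the unknown ones. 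Second, ``determine the entire height distribution'' is strictly harder than the statement being proved: the paper itself never obtains $k(B)$, $k_1(B)$ or $l(B)$ for this family when $m\ge 3$ (its Theorem~\ref{p3} covers only the minimal non-abelian case $m=l+1$, i.e. $m=2$ when $l=1$), so there is no basis for expecting the sketched analysis of Cartan matrices and generalized decomposition numbers to close.

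The paper's actual proof requires no control of heights $\ge 1$ whatsoever, and this is the idea missing from your proposal. Assume $B$ is non-nilpotent, so $\mathfrak{foc}(B)=\langle x\rangle$ and $|D:\mathfrak{foc}(B)|=3^n$, and suppose $k_0(B)=2\cdot 3^n$. By Robinson \cite{RobinsonFocal}, the group $D/\mathfrak{foc}(B)$ acts freely on $\Irr_0(B)$, and the action preserves degrees; hence $\Irr_0(B)$ consists of exactly two orbits of length $3^n$, with degrees $a|P:D|$ and $b|P:D|$ where $P\in\Syl_3(G)$ and $3\nmid ab$. Since $a^2+b^2\equiv 2\pmod{3}$, it follows that
\[\biggl|\sum_{\chi\in\Irr_0(B)}\chi(1)^2\biggr|_3=3^n|P:D|^2(a^2+b^2)_3=|P:D|^2\,|D:\mathfrak{foc}(B)|.\]
By Theorem~1.1 of Kessar--Linckelmann--Navarro \cite{KLN}, this equality holds precisely for nilpotent blocks --- a contradiction. (This also explains why the result is special to $p\equiv 3\pmod 4$: for $p\equiv 1\pmod 4$ one can have $p\mid a^2+b^2$, and indeed the paper remarks that $k_0(B)=2p^n$ can only occur when $p\equiv 1\pmod 4$.) Without this argument, or a genuinely complete execution of your far more demanding alternative, the case $e(B)=2$, $m\ge 3$ remains open.
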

\begin{proof}
We may assume that $B$ is non-nilpotent.
By Proposition~\ref{inequ} we have $k_0(B)\in\{2\cdot 3^n,3^{n+1}\}$. By way of contradiction, suppose that $k_0(B)=2\cdot 3^n$. Let $P\in\Syl_p(G)$. Since $D/\mathfrak{foc}(B)$ acts freely on $\Irr_0(B)$, there are $3^n$ characters of degree $a|P:D|$, and $3^n$ characters of degree $b|P:D|$ in $B$ for some $a,b\ge 1$ such that $3\nmid a,b$. Hence,
\[\biggl|\sum_{\chi\in\Irr_0(B)}{\chi(1)^2}\biggr|_3=3^n|P:D|^2(a^2+b^2)_3=|P:D|^2|D:\mathfrak{foc}(B)|.\]
Now Theorem~1.1 in \cite{KLN} gives a contradiction.
\end{proof}

A generalization of the argument in the proof shows that in the situation of Proposition~\ref{inequ}, $k_0(B)=2p^n$ can only occur if $p\equiv 1\pmod{4}$.

\section{Lower defect groups}


In the following we use the theory of lower defect groups in order to estimate $l(B)$. We cite a few results from the literature. Let $B$ be a $p$-block of a finite group $G$ with defect group $D$ and Cartan matrix $C$. We denote the multiplicity of an integer $a$ as elementary divisor of $C$ by $m(a)$. Then $m(a)=0$ unless $a$ is a $p$-power. It is well-known that $m(|D|)=1$. Brauer \cite{BrauerLDG} expressed $m(p^n)$ ($n\ge 0$) in terms of \emph{$1$-multiplicities} of lower defect groups (see also Corollary V.10.12 in \cite{Feit}):
\begin{equation}\label{brauerLDG}
m(p^n)=\sum_{R\in\mathcal{R}}{m_B^{(1)}(R)}
\end{equation}
where $\mathcal{R}$ is a set of representatives for the $G$-conjugacy classes of subgroups $R\le D$ of order $p^n$. 
Later \eqref{brauerLDG} was refined by Broué and Olsson by invoking the fusion system $\mathcal{F}$ of $B$.

\begin{Proposition}[Broué-Olsson \cite{BroueOlsson}]\label{LDG}
For $n\ge 0$ we have
\[m(p^n)=\sum_{R\in\mathcal{R}}{m_B^{(1)}(R,b_R)}\]
where $\mathcal{R}$ is a set of representatives for the $\mathcal{F}$-conjugacy classes of subgroups $R\le D$ of order $p^n$. 
\end{Proposition}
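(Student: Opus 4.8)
The plan is to upgrade Brauer's formula \eqref{brauerLDG} by attaching to each lower defect group $R$ the block datum of its centraliser, thereby replacing the coarse $G$-conjugacy relation by the finer $\mathcal{F}$-conjugacy relation. Recall that in \eqref{brauerLDG} only those $R$ that are genuine lower defect groups of $B$ contribute, and up to $G$-conjugacy all of these lie inside the fixed defect group $D$. The whole argument rests on a local refinement of the $1$-multiplicity, so I would first isolate that refinement and then perform the re-indexing.

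First I would introduce, for a $B$-subpair $(R,b_R)$ (that is, $R\le D$ a $p$-subgroup and $b_R$ a block of $\C_G(R)$ with $b_R^G=B$), a subpair version $m_B^{(1)}(R,b_R)$ of the $1$-multiplicity, and prove the additive decomposition
\[m_B^{(1)}(R)=\sum_{b_R}m_B^{(1)}(R,b_R),\]
where $b_R$ runs through a set of representatives for the $\N_G(R)$-orbits of blocks of $\C_G(R)$ satisfying $b_R^G=B$. The input here is the Brauer homomorphism $\operatorname{Br}_R$ together with the description of $m_B^{(1)}(R)$ as the $k$-rank of a suitable section of $\operatorname{Br}_R$ on the center $\Z(kG)$, cut by the block $B$. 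Since $\operatorname{Br}_R(e_B)$ is a sum of pairwise orthogonal block idempotents of $\C_G(R)$ indexed precisely by the blocks $b_R$ with $b_R^G=B$, the relevant rank splits along this orthogonal decomposition, so each $m_B^{(1)}(R,b_R)$ is well defined. The same computation shows that $m_B^{(1)}(R,b_R)$ depends only on the $G$-conjugacy class of the subpair $(R,b_R)$.

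With the refinement in hand I would substitute it into \eqref{brauerLDG} and reorganise the resulting double sum. Summing $m_B^{(1)}(R)=\sum_{b_R}m_B^{(1)}(R,b_R)$ over representatives $R$ for the $G$-classes of subgroups of order $p^n$ and over the $\N_G(R)$-orbits of admissible blocks $b_R$ produces exactly one term for each $G$-conjugacy class of $B$-subpairs $(R,b_R)$ with $|R|=p^n$. Now the standard theory of the fusion system $\mathcal{F}$ intervenes: every $B$-subpair is $G$-conjugate to one contained in the maximal subpair $(D,b_D)$, and two subpairs inside $(D,b_D)$ are $G$-conjugate if and only if their first components are $\mathcal{F}$-conjugate. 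Hence the $G$-conjugacy classes of $B$-subpairs of order $p^n$ are in bijection with the $\mathcal{F}$-conjugacy classes of subgroups $R\le D$ of order $p^n$, each class carrying its uniquely determined block $b_R$ with $(R,b_R)\le(D,b_D)$. This turns the double sum into $\sum_{R\in\mathcal{R}}m_B^{(1)}(R,b_R)$ with $\mathcal{R}$ as stated, giving the claim.

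I expect the main obstacle to be the local step, namely establishing the additive decomposition of $m_B^{(1)}(R)$ and the conjugacy-invariance of the refined multiplicities $m_B^{(1)}(R,b_R)$. This requires the module-theoretic machinery behind lower defect groups -- expressing the $1$-multiplicity through the Brauer homomorphism and a rank computation on $\Z(kG)$, and verifying that the orthogonal decomposition of $\operatorname{Br}_R(e_B)$ into block idempotents of $\C_G(R)$ is genuinely compatible with that rank rather than merely refining the index set. By contrast, the re-indexing over $\mathcal{F}$-classes is essentially formal once the subpair refinement is available, since it only invokes that $\mathcal{F}$-conjugacy inside $(D,b_D)$ coincides with $G$-conjugacy of the corresponding subpairs.
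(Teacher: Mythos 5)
Your strategy is the right one, and it is worth noting up front that the paper itself does not prove this proposition at all: its ``proof'' is the citation ``(2S) of \cite{BroueOlsson}''. What you outline is essentially the route of that cited paper --- first refine Brauer's formula \eqref{brauerLDG} by subpair data, then re-index $G$-classes of $B$-subpairs by $\mathcal{F}$-classes of subgroups of $D$. Your second (re-indexing) step is correct as written: every subpair $(R,b)$ with $b^G=B$ is $G$-conjugate to one contained in $(D,b_D)$, and two subpairs inside $(D,b_D)$ are $G$-conjugate exactly when their first components are $\mathcal{F}$-conjugate.

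The genuine gap is in your first step, precisely at the point you yourself flag as the main obstacle, and it is not a technicality. The rank defining $m_B^{(1)}(R)$ does \emph{not} split along the orthogonal decomposition $\operatorname{Br}_R(e_B)=\sum_{b^G=B}e_b$: taken at face value, that splitting would yield $m_B^{(1)}(R)=\sum_{b}m_B^{(1)}(R,b)$ with $b$ running over \emph{all} blocks of $\C_G(R)$ satisfying $b^G=B$, while the lemma you state (and which your re-indexing needs) runs only over $\N_G(R)$-orbit representatives of such blocks. The two versions disagree whenever some orbit has more than one element, and the ``all blocks'' version is simply false: take $R=D$. Then $m_B^{(1)}(D)=m(|D|)=1$, whereas the blocks $b$ of $\C_G(D)$ with $b^G=B$ form a single $\N_G(D)$-orbit of length $|\N_G(D):\N_G(D,b_D)|$, each member having $m_B^{(1)}(D,b)=m_B^{(1)}(D,b_D)=1$ by the conjugation-invariance you established; so the naive splitting overcounts by the orbit length. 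The missing idea that repairs this is the following: the subspace computing $m_B^{(1)}(R)$, namely $\operatorname{Br}_R(e_BI_R)$ where $I_R$ is spanned by the $G$-class sums whose defect groups lie in a conjugate of $R$, consists of $\N_G(R)$-\emph{fixed} elements of $\Z(k\C_G(R))$, because it is spanned by images of $G$-class sums. For a fixed vector $v$ and $n\in\N_G(R)$ one has $e_{b^n}v=(e_bv)^n$, so on this fixed-point space the projection onto a \emph{single} block of each $\N_G(R)$-orbit is injective, and the dimension therefore decomposes as one single-block rank per orbit, not per block. This fixed-point argument is exactly what the statements of \cite{BroueOlsson} preceding (2S) supply; once it is in place, your re-indexing goes through and gives the proposition.
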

\begin{proof}
This is (2S) of \cite{BroueOlsson}.
\end{proof}

In the present paper we do not need the precise (and complicated) definition of the non-negative numbers $m_B^{(1)}(R)$ and $m_B^{(1)}(R,b_R)$. We say that $R$ is a \emph{lower defect group} for $B$ if $m_B^{(1)}(R,b_R)>0$.
In particular, $m_B^{(1)}(D,b_D)=m^{(1)}_B(D)=m(|D|)=1$. 
A crucial property of lower defect groups is that their multiplicities can usually be determined locally. In the next lemma, $b_R^{\N_G(R,b_R)}$ denotes the (unique) Brauer correspondent of $b_R$ in $\N_G(R,b_R)$.

\begin{Lemma}\label{localLDG}
For $R\le D$ and $B_R:=b_R^{\N_G(R,b_R)}$ we have $m_B^{(1)}(R,b_R)=m_{B_R}^{(1)}(R)$. If $R$ is fully $\mathcal{F}$-normalized, then $B_R$ has defect group $\N_D(R)$ and fusion system $\N_{\mathcal{F}}(R)$. 
\end{Lemma}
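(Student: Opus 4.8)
The plan is to handle the two assertions separately, since the first is a statement about local multiplicities of lower defect groups while the second concerns the fusion-theoretic structure of the Brauer correspondent $B_R$. For the equality $m_B^{(1)}(R,b_R)=m_{B_R}^{(1)}(R)$ I would appeal to the locality of the $1$-multiplicities. By construction (see \cite{BroueOlsson} and Chapter~V of \cite{Feit}), the subpair-refined multiplicity $m_B^{(1)}(R,b_R)$ is obtained by passing to the Brauer correspondent block $B_R=b_R^{\N_G(R,b_R)}$, inside which $R$ has become a normal $p$-subgroup; there the relevant local block is precisely $b_R$, and the contribution of the subpair $(R,b_R)$ to $B$ equals the ordinary $1$-multiplicity of $R$ as a lower defect group of $B_R$. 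Thus the first equality is a standard property of these numbers, and I would cite it rather than reprove it, consistently with our decision to suppress the technical definition of $m_B^{(1)}$.

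For the second assertion I would use the Alperin-Broué theory of $B$-Brauer pairs (see Part~IV of \cite{AKO}). I would first recall that $b_R$ is an $R$-stable block of $\C_G(R)$, so that $R\le\N_G(R,b_R)$ and $\C_G(R)\trianglelefteq\N_G(R,b_R)$. The decisive input is Brauer's first main theorem for subpairs: because $R$ is fully $\mathcal{F}$-normalized, $\N_D(R)$ is a defect group of $B_R$ and $(\N_D(R),(b_D)_{\N_D(R)})$ is a maximal $B_R$-Brauer pair. Finally, the fusion system of $B_R$ with respect to this maximal pair is, by definition, built from the $\N_G(R,b_R)$-conjugation of subpairs below $(\N_D(R),(b_D)_{\N_D(R)})$; matching this against the construction of the normalizer fusion system identifies it with $\N_{\mathcal{F}}(R)$.

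The step I expect to be most delicate is the first equality, since it rests on the precise definition of the subpair-refined multiplicity $m_B^{(1)}(R,b_R)$, which we have intentionally avoided spelling out; I would therefore rely on the cited references here. The second part, by contrast, is a clean application of the subpair formalism once one has the characterization of fully $\mathcal{F}$-normalized subgroups via $\N_D(R)$ being a defect group of the Brauer correspondent, so the only real care is in checking that $\N_{\mathcal{F}}(R)$ agrees with the fusion system of $B_R$ on $\N_D(R)$.
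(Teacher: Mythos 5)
Your proposal is correct and follows essentially the same route as the paper: the first equality is exactly the locality statement (2Q) of Broué--Olsson, cited rather than reproved, and the second assertion is the standard subpair/fusion-system result (Theorem~IV.3.19 in \cite{AKO}) on Brauer correspondents of fully $\mathcal{F}$-normalized subgroups. The paper's own proof consists of precisely these two citations, so your slightly expanded sketch of the Alperin--Broué argument for the second part adds detail but no new ideas.
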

\begin{proof}
The first claim follows from (2Q) in \cite{BroueOlsson}. For the second claim we refer to Theorem~IV.3.19 in \cite{AKO}.
\end{proof}

Another important reduction is given by the following lemma.

\begin{Lemma}\label{LDGle}
For $R\le D$ we have $\sum_{Q\in\mathcal{R}}{m_{B_R}^{(1)}(Q)}\le l(b_R)$ where $\mathcal{R}$ is a set of representatives for the $\N_G(R,b_R)$-conjugacy classes of subgroups $Q$ such that $R\le Q\le \N_D(R)$.
\end{Lemma}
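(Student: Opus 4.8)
The plan is to derive the bound from Brauer's formula \eqref{brauerLDG} applied to the local block $B_R$, combined with a Clifford-theoretic reduction along the normal subgroup $R$. Set $N:=\N_G(R,b_R)$, so that $R\trianglelefteq N$ and the centralizer $C:=\C_G(R)=\C_N(R)$ is also normal in $N$. By construction $b_R$ is $N$-invariant, and $B_R=b_R^N$ is a block of $N$ covering $b_R$; by Lemma~\ref{localLDG} it has defect group $\N_D(R)\ge R$, so every lower defect group occurring in the sum lies between $R$ and a fixed defect group.

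First I would record the global form of the estimate. Summing \eqref{brauerLDG} for $B_R$ over all $n\ge 0$ and using that the Cartan matrix of any block has exactly as many elementary divisors (counted with multiplicity) as its size, one obtains
\[\sum_{Q}m_{B_R}^{(1)}(Q)=l(B_R),\]
the sum running over $N$-conjugacy classes of all subgroups $Q\le\N_D(R)$. Discarding those $Q$ with $R\not\le Q$ gives $\le l(B_R)$ at once; the real content of the lemma is to sharpen $l(B_R)$ to the a priori smaller invariant $l(b_R)$.

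The decisive step is the reduction along $R$. Since each element of $C$ commutes with each element of $R$, the product $RC$ is a central product, normal in $N$, with $RC/C\cong R/\Z(R)$ a $p$-group acting trivially on $\Irr(b_R)$ and on $\IBr(b_R)$. Hence the unique block $\tilde b$ of $RC$ covering the $N$-stable block $b_R$ satisfies $l(\tilde b)=l(b_R)$, since each irreducible Brauer character of $b_R$ extends uniquely to $RC$. I would then argue that a subgroup $Q$ with $R\le Q\le\N_D(R)$ can contribute to the elementary divisors of $B_R$ only through the covered block $\tilde b$: using $RC\trianglelefteq N$ together with the local reduction (2Q) of \cite{BroueOlsson} underlying Lemma~\ref{localLDG}, the multiplicities $m_{B_R}^{(1)}(Q)$ for $R\le Q$ are matched with multiplicities of $\tilde b$, and the global estimate of the previous paragraph applied to $\tilde b$ bounds their total by $l(\tilde b)=l(b_R)$.

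The main obstacle is exactly this matching: one must make rigorous that the lower defect groups of $B_R$ lying above $R$ are governed by the covered block of the central product $RC$, i.e.\ establish the relevant relative version of Brauer's formula for the normal subgroup $RC$. The remaining ingredients---the central product structure, the triviality of the $RC/C$-action on $\IBr(b_R)$ (hence $l(\tilde b)=l(b_R)$), and the total elementary divisor count---are routine; it is the compatibility of the lower-defect-group multiplicities with the Clifford correspondence along $RC$ that carries the weight.
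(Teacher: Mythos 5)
Your preparatory steps are correct but they are the routine part of the problem. The identity $\sum_{Q}m_{B_R}^{(1)}(Q)=l(B_R)$ (summing \eqref{brauerLDG} over all orders and counting elementary divisors of the Cartan matrix), the central product structure of $R\C_G(R)$, the observation that $\Z(R)=R\cap\C_G(R)$ is a central $p$-subgroup of $\C_G(R)$ so that every simple $b_R$-module extends uniquely to $R\C_G(R)$, and hence $l(\tilde b)=l(b_R)$ for the unique block $\tilde b$ of $R\C_G(R)$ covering the $\N_G(R,b_R)$-stable block $b_R$ --- all of this is fine. But the inequality you still need,
\[\sum_{Q\ge R}m_{B_R}^{(1)}(Q)\le l(\tilde b),\]
is left unproven, and it does not follow from the tools you invoke. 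In particular, (2Q) of \cite{BroueOlsson} cannot carry out the ``matching'': that result relates the subpair multiplicity $m_B^{(1)}(R,b_R)$ computed in $G$ to the multiplicity $m_{B_R}^{(1)}(R)$ computed in the normalizer $\N_G(R,b_R)$, i.e. it is a statement about Brauer correspondence, not about passage to a \emph{normal} subgroup such as $R\C_N(R)\trianglelefteq N:=\N_G(R,b_R)$. Lower defect group multiplicities do not transfer formally along block coverings; comparing them with invariants of a covered block requires working with the actual definition of $m^{(1)}$ (contributions of conjugacy classes of given defect and the associated rank computations), which your outline explicitly avoids.

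In effect you have reduced the Lemma to exactly the statement of Theorem~5.11 (together with the remark following it) in \cite{OlssonLDG}, applied to the group $N$ with normal $p$-subgroup $R$ and block $B_R$ --- and that citation \emph{is} the paper's entire proof. So the ``main obstacle'' you flag in your last paragraph is not a technical detail to be filled in later; it is the whole content of the lemma, and your proposal neither proves it nor cites a result that does.
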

\begin{proof}
This is implied by Theorem 5.11 in \cite{OlssonLDG} and the remark following it. Notice that in Theorem~5.11 it should read $B\in\Bl(G)$ instead of $B\in\Bl(Q)$.
\end{proof}

In the local situation for $B_R$ also the next lemma is useful.

\begin{Lemma}\label{centralLDG}
If $\pcore_p(\Z(G))\nsubseteq R$, then $m_B^{(1)}(R)=0$.
\end{Lemma}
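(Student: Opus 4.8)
The plan is to prove the contrapositive: every subgroup $R$ with $m_B^{(1)}(R)>0$ must contain the central $p$-subgroup $Z:=\pcore_p(\Z(G))$.

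First I would record the elementary group-theoretic fact that drives everything. Since $Z$ is a normal $p$-subgroup of $G$, it lies in every Sylow $p$-subgroup of every subgroup of $G$ that contains it. In particular, for any conjugacy class $C$ of $G$ with defect group $\delta(C)\in\Syl_p(\C_G(g))$ (where $g\in C$), we have $Z\le\Z(G)\le\C_G(g)$ with $Z$ normal in $\C_G(g)$, hence $Z\le\delta(C)$. Thus $Z$ is contained in the defect group of \emph{every} conjugacy class of $G$.

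Next I would invoke Brauer's description of the $1$-multiplicities underlying \eqref{brauerLDG}. The number $m_B^{(1)}(R)$ is assembled, in the graded pieces of $\Z(kG)$, from the class sums $C^+$ whose defect group $\delta(C)$ is $G$-conjugate to $R$; in particular $m_B^{(1)}(R)=0$ as soon as no conjugacy class of $G$ has defect group $G$-conjugate to $R$ (this is exactly the content of \cite{BrauerLDG}, see also Corollary~V.10.12 in \cite{Feit}). Combining the two steps finishes the argument: assume $Z\nsubseteq R$, and suppose for contradiction that some class $C$ satisfied $\delta(C)=R^g$ for some $g\in G$. Then $Z\le\delta(C)=R^g$, and since $Z$ is normal we get $Z=Z^{g^{-1}}\le R$, a contradiction. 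Hence no conjugacy class has defect group conjugate to $R$, so $m_B^{(1)}(R)=0$.

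The only delicate point is the second step, which rests on Brauer's original definition of the lower-defect-group multiplicities in terms of the defect groups of conjugacy classes—precisely the filtration of $\Z(kG)$ that this paper deliberately does not spell out. I expect this to be the main thing to pin down, and I would cite it directly from \cite{BrauerLDG} and \cite{Feit} rather than re-derive it. (An alternative route, passing to $\overline{G}=G/Z$ and comparing Cartan matrices via $C_B=|Z|\,C_{\overline{B}}$, shows that all elementary divisors of $C_B$ are divisible by $|Z|$ and hence that lower defect groups have order at least $|Z|$; but this only yields $|R|\ge|Z|$, not the containment $Z\le R$, so I would not pursue it here.)
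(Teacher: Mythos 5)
Your argument is correct, and it necessarily differs from the paper's ``proof'', which consists of the single sentence ``See Corollary~3.7 in \cite{OlssonLDG}'': what you have written out is essentially the standard argument lying behind that citation. Both of your ingredients are sound. Since $\pcore_p(\Z(G))$ is a central $p$-subgroup of $G$, it is a normal $p$-subgroup of $\C_G(g)$ for every $g\in G$ and hence lies in every class defect group $\delta(C)$; and since it is normal in $G$, containment in a conjugate of $R$ forces containment in $R$ itself, so no class defect group can be conjugate to $R$ when $\pcore_p(\Z(G))\nsubseteq R$. Combined with the fact that $m_B^{(1)}(R)$ vanishes unless $R$ is conjugate to the defect group of a conjugacy class, this yields the lemma. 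Two points of precision on that second fact. First, in Brauer's theory the support statement should be phrased with $p$-regular classes: $\sum_B m_B^{(1)}(R)$, summed over all blocks, equals the number of $p$-regular classes whose defect group is conjugate to $R$; an all-classes version cannot hold as an equality, since summing over $R$ would give $k(G)$ on one side but $\sum_B l(B)$ (the number of $p$-regular classes) on the other. Your proof is unaffected, because you show that no class whatsoever -- in particular no $p$-regular class -- has defect group conjugate to $R$, and the multiplicities are non-negative. Second, Corollary~V.10.12 in \cite{Feit} is the elementary-divisor identity \eqref{brauerLDG}, not the support statement; the latter is contained in \cite{BrauerLDG} and in the surrounding material of Chapter~V, Section~10 of \cite{Feit}, so the pinpoint citation should be adjusted. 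With these caveats, your argument is a complete, self-contained proof, which is arguably more informative than the bare reference given in the paper.
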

\begin{proof}
See Corollary~3.7 in \cite{OlssonLDG}.
\end{proof}

Now we apply these results.

\begin{Lemma}\label{lowermeta}
Let $B$ be a $p$-block of a finite group with metacyclic, minimal non-abelian defect group $D$ for an odd prime $p$.
Then every lower defect group of $B$ is $D$-conjugate either to $\langle y\rangle$, $\langle y^p\rangle$, or to $D$.
\end{Lemma}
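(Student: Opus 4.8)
The plan is to classify the subgroups $R\le D$ that can possibly be lower defect groups and then eliminate the unwanted candidates using the local reductions established in Lemmas~\ref{localLDG}--\ref{centralLDG} together with Proposition~\ref{LDG}. First I would recall that every lower defect group $R$ satisfies $m_B^{(1)}(R,b_R)>0$, and by Lemma~\ref{localLDG} this equals $m_{B_R}^{(1)}(R)$ where $B_R=b_R^{\N_G(R,b_R)}$ has defect group $\N_D(R)$ and fusion system $\N_{\mathcal{F}}(R)$ (taking $R$ fully $\mathcal{F}$-normalized in its class, which is harmless). Since the Sylow case $R=D$ always contributes (with multiplicity $1$), I only need to treat proper subgroups. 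The key structural input is the description of $\mathcal{F}$: it is controlled with $\Out_{\mathcal{F}}(D)=\langle\alpha\rangle$ where $\alpha(x)\in\langle x\rangle$ and $\alpha(y)=y$, so that $\mathfrak{foc}(B)\subseteq\langle x\rangle$. I expect the $\mathcal{F}$-stable structure to force attention onto subgroups either inside $\langle x\rangle$ or containing a conjugate of $y$.

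Next I would go through the proper subgroups $R<D$ systematically by their position relative to $\langle x\rangle$, $\Z(D)$, and the focal subgroup, using Lemma~\ref{centralLDG} as the main filter. For $R$ to contribute to $B_R$ one needs $\pcore_p(\Z(\N_G(R,b_R)))\subseteq R$; since $R\le\Z(\C_G(R))$ and $\N_D(R)$ centralizes enough of $R$, this central condition severely restricts $R$. The subgroups of the cyclic part $\langle x\rangle$ that are not $\mathcal{F}$-centric or whose normalizer has a larger central $p$-subgroup get killed by Lemma~\ref{centralLDG}. The surviving candidates outside $\langle x\rangle$ should be exactly those built from $y$, and because $\alpha$ fixes $y$ the subgroups $\langle y\rangle$ and $\langle y^p\rangle$ are $\mathcal{F}$-stable (so they are genuine candidates), whereas larger or differently-placed subgroups either fuse into $D$-conjugates of these or fail the centrality test. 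I would use Lemma~\ref{LDGle} in the form $\sum_{Q}m_{B_R}^{(1)}(Q)\le l(b_R)$ to control the total contribution when the local block $B_R$ has small $l$-invariant, which pins down that no extra subgroups can carry positive multiplicity.

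The main obstacle I anticipate is the intermediate cyclic subgroups of $\langle x\rangle$ and the subgroups $\langle x^{p^i},y^{p^j}\rangle$ that are neither inside $\langle x\rangle$ nor conjugate to a power of $y$: one must show each of these is \emph{not} a lower defect group. Here the argument is genuinely local: for such an $R$ one computes $\N_D(R)$ and analyzes the Brauer correspondent block $B_R$ with defect group $\N_D(R)$ and fusion system $\N_{\mathcal{F}}(R)$, showing either that $\N_{\mathcal{F}}(R)$ is nilpotent (so the elementary divisors of the Cartan matrix of $B_R$ are known and $m_{B_R}^{(1)}(R)=0$ for $R\neq\N_D(R)$) or that the central $p$-subgroup of $\N_G(R,b_R)$ strictly exceeds $R$, triggering Lemma~\ref{centralLDG}. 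Carrying this out requires the explicit normalizer computations $\N_D(R)$ inside the metacyclic group $D$, which I will do using part~(i) and~(ii) of the opening Lemma (giving $D'=\langle x^{p^{m-1}}\rangle$ and $\Z(D)=\langle x^p\rangle\times\langle y^p\rangle$ in the minimal non-abelian case $m=l+1$).

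Finally, with all proper subgroups other than the $D$-conjugates of $\langle y\rangle$ and $\langle y^p\rangle$ eliminated, I conclude that the only possible lower defect groups are $D$-conjugate to one of $\langle y\rangle$, $\langle y^p\rangle$, or $D$, as claimed. The verification that these three do survive is easy and follows from $m_B^{(1)}(D,b_D)=1$ for $D$ and from the $\mathcal{F}$-stability together with the nontriviality of the corresponding local contributions for $\langle y\rangle$ and $\langle y^p\rangle$; it is the systematic exclusion of the remaining candidates that constitutes the bulk of the work.
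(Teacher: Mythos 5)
Your overall strategy (reduce to the local multiplicity $m^{(1)}_{B_R}(R)$ via Lemma~\ref{localLDG}, then eliminate candidates $R$) is the right general spirit, but both of your elimination filters fail, and an essential arithmetic ingredient is missing. First, Lemma~\ref{centralLDG} cannot serve as your ``main filter'': to kill a candidate $R$ with it you would have to verify $\pcore_p(\Z(\N_G(R,b_R)))\nsubseteq R$, but $\Z(\N_G(R,b_R))$ depends on the ambient group $G$ and is not determined by $D$ and $\mathcal{F}$; its $p$-part may well be trivial, in which case the hypothesis fails and the lemma gives nothing. (The paper uses Lemma~\ref{centralLDG} only later, in Proposition~\ref{lBoben}, in the opposite and verifiable situation where a central $p$-subgroup is known, namely $\langle y\rangle\subseteq\Z(\C_G(y))$.) Second, your nilpotency filter tests the wrong fusion system. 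By Lemma~\ref{localLDG} the block $B_R$ has fusion system $\N_{\mathcal{F}}(R)$, but subgroups such as $\langle x\rangle$, $\langle x^p\rangle$ and every subgroup of $\Z(D)$ are $\alpha$-invariant, so for them $\N_{\mathcal{F}}(R)$ contains $\alpha$ and is non-nilpotent whenever $e(B)>1$; your first branch never applies to them, and by the first point your second branch cannot be checked. The paper's proof instead runs the nilpotency test on the \emph{centralizer} system: if $\C_{\mathcal{F}}(R)$ is trivial, then $b_R$ (the block of $\C_G(R)$) is nilpotent, so $l(b_R)=1$, and Lemma~\ref{LDGle} together with $m^{(1)}_{B_R}(\N_D(R))=1$ and $R<\N_D(R)$ forces $m^{(1)}_{B_R}(R)=0$. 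This weaker hypothesis does hold for $\langle x\rangle$ (note $\C_D(x)$ is abelian and no nontrivial power of $\alpha$ centralizes $x$), and in general it reduces to: $R$ must be centralized by a nontrivial $p'$-element of $\Aut_{\mathcal{F}}(D)$, hence (Schur--Zassenhaus) $R$ is $D$-conjugate into $\C_D(\alpha^i)=\langle y\rangle$.

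Even with that repair, your argument can only conclude that $R$ is $D$-conjugate to a \emph{subgroup} of $\langle y\rangle$, not to $\langle y\rangle$ or $\langle y^p\rangle$. Subgroups such as $\langle y^{p^j}\rangle$ with $j\ge 2$ (and the trivial subgroup when $n\ge 2$) lie in $\Z(D)$ and are centralized by $\alpha$, so they pass every fusion-theoretic test you propose; nothing in your outline excludes them. The paper eliminates them by an ingredient you never invoke: Corollary~5 of \cite{RobinsonFocal}, applied to the focal subgroup $\mathfrak{foc}(B)$, gives $p^{n-1}\mid|R|$ for every lower defect group $R$, and only this divisibility pins the surviving subgroups of $\langle y\rangle$ down to $\langle y\rangle$ and $\langle y^p\rangle$. (This is exactly the ``new ingredient'' advertised in the introduction.) Finally, a minor point: the lemma does not assert that $\langle y\rangle$ and $\langle y^p\rangle$ \emph{are} lower defect groups, and your closing claim that they ``survive'' is false in general --- for a nilpotent block one has $e(B)=1$ and $m(p^n)=e(B)-1=0$ by the computation in Proposition~\ref{lBoben}, so $D$ is then the only lower defect group.
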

\begin{proof}
Let $R<D$ be a lower defect group of $B$. Then $m(|R|)>0$ by Proposition~\ref{LDG}.
Corollary~5 in \cite{RobinsonFocal} shows that $p^{n-1}\mid|R|$. 
Since $\mathcal{F}$ is controlled, the subgroup $R$ is fully $\mathcal{F}$-centralized and fully $\mathcal{F}$-normalized. 
The fusion system of $b_R$ (on $\C_D(R)$) is given by $\C_{\mathcal{F}}(R)$ (see Theorem~IV.3.19 in \cite{AKO}). Suppose for the moment that $\C_{\mathcal{F}}(R)$ is trivial. Then $b_R$ is nilpotent and $l(b_R)=1$. 
Let $B_R:=b_R^{\N_G(R,b_R)}$. Then $B_R$ has defect group $\N_D(R)$ and $m^{(1)}_{B_R}(\N_D(R))=1$. Hence, Lemmas~\ref{localLDG} and \ref{LDGle} imply $m^{(1)}_B(R,b_R)=m^{(1)}_{B_R}(R)=0$. This contradiction shows that $\C_{\mathcal{F}}(R)$ is non-trivial. In particular $R$ is centralized by a non-trivial $p'$-automorphism $\beta\in\Aut_{\mathcal{F}}(D)$. By the Schur-Zassenhaus Theorem, $\beta$ is $\Inn(D)$-conjugate to a power of $\alpha$. Thus, $R$ is $D$-conjugate to a subgroup of $\langle y\rangle$. The result follows.
\end{proof}



\begin{Proposition}\label{lBoben}
Let $B$ be a $p$-block of a finite group with metacyclic, minimal non-abelian defect groups for an odd prime $p$. Then $e(B)\le l(B)\le 2e(B)-1$. 
\end{Proposition}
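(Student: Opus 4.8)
The plan is to bound $l(B)$ from above and below using the lower defect group machinery just developed, together with the inertial index $e(B)$. The lower bound $e(B)\le l(B)$ is already recorded in Proposition~\ref{inequ}, so the real work is the upper bound $l(B)\le 2e(B)-1$. The natural strategy is to invoke Brauer's formula \eqref{brauerLDG} in its refined Broué--Olsson form (Proposition~\ref{LDG}), which expresses the elementary-divisor multiplicities $m(p^k)$ of the Cartan matrix $C$ of $B$ as sums of $1$-multiplicities of lower defect groups over $\mathcal{F}$-classes of subgroups of each order. Since $l(B)$ equals the size of $C$, one has $l(B)=\sum_{k\ge 0}m(p^k)$, so it suffices to control each $m(p^k)$ via the lower defect groups that actually contribute.

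First I would apply Lemma~\ref{lowermeta}, which pins down the possible lower defect groups: every lower defect group of $B$ is $D$-conjugate to $\langle y\rangle$, to $\langle y^p\rangle$, or to $D$ itself. Because $m^{(1)}_B(D,b_D)=m(|D|)=1$, the group $D$ contributes exactly $1$. The three candidates have pairwise distinct orders $p^{n-1}$, $p^n$, and $p^{n+m-1}=|D|$ (using $m=l+1$ so that $|D|=p^{n+m-1}$), so each sits in a different $m(p^k)$ term and I can handle them separately. The heart of the argument is then to bound the contributions of $\langle y\rangle$ and $\langle y^p\rangle$.

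For each of these two subgroups $R$ I would use the local reduction: by Lemma~\ref{localLDG}, $m^{(1)}_B(R,b_R)=m^{(1)}_{B_R}(R)$, where $B_R=b_R^{\N_G(R,b_R)}$ has defect group $\N_D(R)$ and fusion system $\N_{\mathcal{F}}(R)=\C_{\mathcal{F}}(R)$ (as $\mathcal{F}$ is controlled and $R$ is fully normalized). Then Lemma~\ref{LDGle} gives $m^{(1)}_{B_R}(R)\le l(b_R)$, so I need a handle on $l(b_R)$ for these two local blocks. Here the key observation is that the fusion system $\C_{\mathcal{F}}(R)$ is generated by the $p'$-automorphism group $\langle\alpha\rangle$ restricted appropriately, so $B_R$ is again a block whose inertial index is $e(B)$ (since $\alpha$ centralizes $y$ and hence acts on $\C_D(\langle y\rangle)$ and on $\C_D(\langle y^p\rangle)$), and its defect group is abelian or has a controlled, well-understood structure. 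In the abelian-defect local case I would invoke the known result that $l(b_R)=e(b_R)=e(B)$; combining, each of the two subgroups contributes at most $e(B)$ to the respective $m(p^k)$. Summing, $l(B)=m(p^{n+m-1})+m(p^n)+m(p^{n-1})\le 1 + e(B) + e(B)$, but this overcounts and must be sharpened: the contribution of $D$ (the term $m(|D|)=1$) should be absorbed by noting that the $e(B)$ bound from one of the $\langle y\rangle$, $\langle y^p\rangle$ terms already includes the trivial Brauer character, yielding the clean $l(B)\le 2e(B)-1$.

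The main obstacle I anticipate is exactly this last bookkeeping: getting the constant right so that the three contributions assemble to $2e(B)-1$ rather than $2e(B)+1$. This requires understanding precisely which of $\langle y\rangle$ and $\langle y^p\rangle$ genuinely occur as lower defect groups and with what multiplicity, and showing that the $l(b_R)\le e(B)$ bounds are not simultaneously attained at full value together with the $D$-term. Concretely, I would need to verify that one of these local blocks has $\N_D(R)$ equal to $D$ (so its lower defect group contribution is tied to the $m(|D|)=1$ term and cannot be double-counted), which forces a saving of at least one. Establishing this overlap carefully, using the explicit structure of $\C_D(\langle y\rangle)$ and $\C_D(\langle y^p\rangle)$ from the structural lemma on $D$, is the delicate point; the fusion-theoretic inputs (Lemmas~\ref{localLDG}, \ref{LDGle}, and \ref{lowermeta}) do the heavy lifting, but the final count is where the $-1$ must be justified.
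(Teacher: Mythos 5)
Your frame for the upper bound (write $l(B)=\sum_k m(p^k)$, use Proposition~\ref{LDG} and Lemma~\ref{lowermeta} to restrict attention to $\langle y\rangle$, $\langle y^p\rangle$ and $D$, then bound the two proper contributions locally) is indeed the paper's strategy, but \emph{only for $n\ge 2$}, and the steps you leave vague are exactly where the argument breaks. The most serious problem is circularity. For $R=\langle y^p\rangle$ with $n\ge 2$, the block $b_{y^p}$ is a major subsection block whose defect group is $D$ itself --- non-abelian, of the same metacyclic minimal non-abelian type --- so no ``abelian-defect'' input applies, and any bound of the form $l(b_{y^p})\le 2e(B)-1$ is an instance of the very proposition you are proving. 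For $n=1$ the situation is worse: there $\langle y^p\rangle=1$, $B_1=B$, Lemma~\ref{LDGle} only says $l(B)\le l(B)$, and Lemma~\ref{centralLDG} is unavailable without a central $p$-element, while $m(1)$ can genuinely be positive. The paper resolves this by induction on $n$, proving the base case $n=1$ by a completely different, non-local counting argument: Theorem~3.2 of \cite{SambaleHZC} gives $k(B)-l(B)$ exactly, and subtracting it from the upper bound on $k(B)$ in Proposition~\ref{inequ} yields $l(B)\le 2e(B)-\tfrac{1}{p}\bigl(e(B)-e(B)^{-1}\bigr)-e(B)^{-1}<2e(B)$. Your proposal contains no substitute for this base case, and without it the inductive treatment of $\langle y^p\rangle$ has nothing to stand on.

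Second, your key local input for $R=\langle y\rangle$ --- ``the known result that $l(b_R)=e(b_R)$ in the abelian-defect case'' --- is not a known result: for abelian defect groups with cyclic inertial quotient this is precisely Alperin's Weight Conjecture, which is open. What makes $\langle y\rangle$ tractable is not that $\C_D(y)$ is abelian but that $\langle y\rangle\le\Z(\C_G(y))$, so $b_y$ dominates a block $\overline{b_y}$ of $\C_G(y)/\langle y\rangle$ with \emph{cyclic} defect group $\C_D(y)/\langle y\rangle$, and Dade's theorem gives $l(b_y)=e(B)$; the same centrality lets the paper apply Lemma~\ref{centralLDG} in $\C_G(y)$, and the known elementary divisors of the Cartan matrix of $\overline{b_y}$ then give the exact value $m(p^n)=e(B)-1$, not merely $\le e(B)$. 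Finally, even granting all your local claims, your bookkeeping does not close: $1+e(B)+e(B)$ with ``a saving of at least one'' is $2e(B)$, still one more than required. The paper's count is $m(|D|)+m(p^n)+m(p^{n-1})\le 1+(e(B)-1)+(e(B)-1)$, where the second saving comes from comparing the multiplicity of $p^{n-1}$ in the Cartan matrix of $b_{y^p}$ with the total number $l(b_{y^p})\le 2e(B)-1$ of its elementary divisors, of which one equals $|D|$ and $e(B)-1$ equal $p^n$. So both proper terms must each lose one, and justifying that requires exactly the inductive and Cartan-matrix machinery your sketch omits.
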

\begin{proof}
Let 
\[D=\langle x,y\mid x^{p^m}=y^{p^n}=1,\ yxy^{-1}=x^{1+p^{m-1}}\rangle\]
be a defect group of $B$. We argue by induction on $n$. Let $n=1$. By Proposition~\ref{inequ} we have $e(B)\le l(B)$ and
\[k(B)\le \biggl(\frac{p^{m-1}-1}{e(B)}+e(B)\biggr)(1+p-p^{-1}).\]
Moreover, Theorem~3.2 in \cite{SambaleHZC} gives
\[k(B)-l(B)=\frac{p^m+p^{m-1}-p^{m-2}-p}{e(B)}+e(B)(p-1)\]
Hence,
\begin{align*}
l(B)&=k(B)-(k(B)-l(B))\le \biggl(\frac{p^{m-1}-1}{e(B)}+e(B)\biggr)(1+p-p^{-1})-\frac{p^m+p^{m-1}-p^{m-2}-p}{e(B)}-e(B)(p-1)\\
&=2e(B)-\frac{1}{p}\biggl(e(B)-\frac{1}{e(B)}\biggr)-\frac{1}{e(B)},
\end{align*}
and the claim follows in this case.

Now suppose $n\ge 2$.
We determine the multiplicities of the lower defect groups by using Lemma~\ref{lowermeta}. As usual $m(|D|)=1$. Consider the subpair $(\langle y\rangle,b_y)$. By Lemmas~\ref{LDG} and \ref{localLDG} we have $m(p^n)=m^{(1)}_B(\langle y\rangle,b_y)=m^{(1)}_{B_y}(\langle y\rangle)$ where $B_y:=b_y^{\N_G(\langle y\rangle,b_y)}$. Since $\N_D(\langle y\rangle)=\C_D(y)$, it follows easily that $\N_G(\langle y\rangle,b_y)=\C_G(y)$ and $B_y=b_y$. 
By Theorem~IV.3.19 in \cite{AKO} the block $b_y$ has defect group $\C_D(y)$ and fusion system $\C_{\mathcal{F}}(\langle y\rangle)$. In particular $e(b_y)=e(B)$. 
It is well-known that $b_y$ dominates a block $\overline{b_y}$ of $\C_G(y)/\langle y\rangle$ with cyclic defect group $\C_D(y)/\langle y\rangle$ and $e(\overline{b_y})=e(b_y)=e(B)$ (see \cite[Theorem~5.8.11]{Nagao}). By Dade's Theorem on blocks with cyclic defect groups we obtain $l(b_y)=e(B)$. 
Moreover, the Cartan matrix of $\overline{b_y}$ has elementary divisors $1$ and $\lvert\C_D(y)/\langle y\rangle\rvert$ where $1$ occurs with multiplicity $e(B)-1$ (this follows for example from \cite{DetCartan}). Therefore, the Cartan matrix of $b_y$ has elementary divisors $p^n$ and $\lvert\C_D(y)\rvert$ where $p^n$ occurs with multiplicity $e(B)-1$. 
Since $\langle y\rangle\subseteq\Z(\C_G(y))$, Lemma~\ref{centralLDG} implies $m(p^n)=m^{(1)}_{b_y}(\langle y\rangle)=e(B)-1$.

Now consider $(\langle u\rangle,b_u)$ where $u:=y^p\in\Z(D)$. Here $b_u$ has defect group $D$. By the first part of the proof (the case $n=1$) we obtain $l(b_u)=l(\overline{b_u})\le 2e(B)-1$. As above we have $m(p^{n-1})=m^{(1)}_B(\langle u\rangle,b_u)=m^{(1)}_{b_u}(\langle u\rangle)$. Since $p^n$ occurs as elementary divisor of the Cartan matrix of $b_u$ with multiplicity $e(B)-1$ (see above), it follows that $m(p^{n-1})=m^{(1)}_{b_u}(\langle u\rangle)\le e(B)-1$. Now $l(B)$ is the sum over the multiplicities of elementary divisors of the Cartan matrix of $B$ which is at most $m(|D|)+m(\langle y\rangle)+m(\langle u\rangle)\le 1+e(B)-1+e(B)-1=2e(B)-1$.
\end{proof}


The next proposition gives a reduction method.

\begin{Proposition}\label{reduction}
Let $p>2$, $m\ge 2$ and $e\mid p-1$ be fixed. Suppose that $l(B)=e$ holds for every block $B$ with defect group
\[D=\langle x,y\mid x^{p^m}=y^p=1,\ yxy^{-1}=x^{1+p^{m-1}}\rangle\]
and $e(B)=e$. Then every block $B$ with $e(B)=e$ and defect group
\[D=\langle x,y\mid x^{p^m}=y^{p^n}=1,\ yxy^{-1}=x^{1+p^{m-1}}\rangle\]
where $n\ge 1$ satisfies the following:
\begin{align*}
k_0(B)&=\biggl(\frac{p^{m-1}-1}{e(B)}+e(B)\biggr)p^n,&k_1(B)&=\frac{p^{m-1}-p^{m-2}}{e(B)}p^{n-1},\\
k(B)&=\biggl(\frac{p^m+p^{m-1}-p^{m-2}-p}{e(B)}+e(B)p\biggr)p^{n-1},&l(B)&=e(B).
\end{align*}
\end{Proposition}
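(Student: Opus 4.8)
The plan is to argue by induction on $n$, the case $n=1$ being the hypothesis. For $n=1$ we have $l(B)=e$ by assumption, $k_0(B)$ is given by Theorem~\ref{AMC}, and $k(B)-l(B)$ equals the explicit quantity recorded in the proof of Proposition~\ref{lBoben} (Theorem~3.2 in \cite{SambaleHZC}); together with $k_1(B)=k(B)-k_0(B)$ from Proposition~\ref{k2} this yields all four formulas, so the base case is settled. Assume now $n\ge 2$ and that the assertion holds for smaller $y$-parameter. The value of $k_0(B)$ holds for every $n$ by Theorem~\ref{AMC}, and the stated expression for $k(B)$ is equivalent to the one for $k_1(B)$ via $k(B)=k_0(B)+k_1(B)$ together with $k_1(B)=k(B)-k_0(B)$ (Proposition~\ref{k2}). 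Hence the real content is to compute $l(B)$ and $k(B)$.

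For $l(B)$ I would use lower defect groups. By Lemma~\ref{lowermeta} every lower defect group is $D$-conjugate to $D$, to $\langle y\rangle$, or to $\langle y^p\rangle$, so $l(B)=m(|D|)+m(p^n)+m(p^{n-1})$. Exactly as in the proof of Proposition~\ref{lBoben} one has $m(|D|)=1$ and $m(p^n)=e(B)-1$, and the point is to show $m(p^{n-1})=0$. Put $u:=y^p\in\Z(D)$ and consider the major subsection $(\langle u\rangle,b_u)$. The block $b_u$ dominates a block $\overline{b_u}$ of $\C_G(u)/\langle u\rangle$ whose defect group $D/\langle u\rangle$ is of minimal non-abelian type with $y$-parameter equal to $1$. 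By the hypothesis $l(\overline{b_u})=e(B)$, and the $n=1$ lower-defect-group analysis then forces the Cartan matrix of $\overline{b_u}$ to have elementary divisors $p^{m+1}$ (once) and $p$ (with multiplicity $e(B)-1$) and nothing else. Since $\langle u\rangle$ is central of order $p^{n-1}$, the Cartan matrix of $b_u$ is $p^{n-1}$ times that of $\overline{b_u}$, so its elementary divisors are only $p^{m+n}$ and $p^n$; in particular $p^{n-1}$ does not occur, whence $m^{(1)}_{b_u}(\langle u\rangle)=0$ by Lemmas~\ref{LDG} and~\ref{localLDG}. Therefore $m(p^{n-1})=0$ and $l(B)=1+(e(B)-1)+0=e(B)$.

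The remaining and main obstacle is to pin down $k(B)$. Combining the lower bound of Proposition~\ref{inequ} with the value of $k_0(B)$ already gives
\[
k_1(B)=k(B)-k_0(B)\ge\frac{p^{m-1}-p^{m-2}}{e(B)}p^{n-1},
\]
which is precisely the claimed value, so everything reduces to the matching \emph{upper} bound $k(B)\le\bigl(\tfrac{p^{m-1}+p^{m-2}-p^{m-3}-1}{e(B)}+e(B)\bigr)p^n$. To obtain it I would use Brauer's formula $k(B)=\sum_{(v,b_v)}l(b_v)$, the sum running over representatives of the $\mathcal{F}$-conjugacy classes of subsections, and show that every summand is as small as possible, namely $l(b_v)=e(b_v)$. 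For $v=1$ this is the equality $l(B)=e(B)$ just proved. For $v\in\Z(D)$ with $v\neq 1$ the block $b_v$ has defect group $D$ and dominates a block on $D/\langle v\rangle$ whose defect group is either abelian or again minimal non-abelian with strictly smaller $y$-parameter; in the first case $b_v$ is nilpotent (so $e(b_v)=1$), in the second the induction hypothesis applies, and in either case $l(b_v)=e(b_v)$. For $v\notin\Z(D)$ the centralizer $\C_D(v)=\Z(D)\langle v\rangle$ is abelian of index $p$, so $b_v$ has abelian defect group; passing to a suitable central cyclic quotient leaves a block with cyclic defect group, and Dade's theorem yields $l(b_v)=e(b_v)$. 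Summing these local contributions reproduces exactly the lower bound of Proposition~\ref{inequ}, forcing $k(B)$ to equal it, after which $k_1(B)=k(B)-k_0(B)$ by Proposition~\ref{k2}. The delicate step is the bookkeeping of the $\mathcal{F}$-classes of subsections together with their precise inertial indices $e(b_v)$, since it is this enumeration that must make the subsection sum collapse to the claimed closed formula.
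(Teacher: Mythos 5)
Your proposal shares the paper's two main tools (induction on $n$ with the same base case, and the subsection formula $k(B)=\sum_{(v,b_v)}l(b_v)$ from Theorem~5.9.4 in \cite{Nagao}), but your route to $l(B)=e(B)$ is genuinely different from the paper's and is sound in outline. The paper never proves $m(p^{n-1})=0$: it only has $m(p^{n-1})\le e(B)-1$ (Proposition~\ref{lBoben}), computes $k(B)-l(B)$ exactly from the subsection sum, and then pins down $k(B)$ via the divisibility $p^{n-1}\mid k(B)$ from Proposition~\ref{inequ} together with $e(B)-1<p^{n-1}$; only then does $l(B)=e(B)$ fall out. You instead use the hypothesis on the $n=1$ quotient block $\overline{b_u}$, $u=y^p$, to kill the elementary divisor $p^{n-1}$ of the Cartan matrix of $b_u$, which gives $l(B)=1+(e(B)-1)+0=e(B)$ directly and makes the divisibility trick unnecessary. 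The one detail you wave at (``the $n=1$ lower-defect-group analysis forces\dots'') does need an argument: to get $m_{\overline{b_u}}(1)=0$ you must show $m_{\overline{b_u}}(p)=e(B)-1$ \emph{exactly}, by running the $\langle y\rangle$-subsection argument of Proposition~\ref{lBoben} (Dade plus Lemmas~\ref{localLDG} and~\ref{centralLDG}) for the group $C_{p^m}\rtimes C_p$; only then does the hypothesis $l(\overline{b_u})=e(B)$ give $m_{\overline{b_u}}(1)=l(\overline{b_u})-1-(e(B)-1)=0$.

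The genuine gap is in the computation of $k(B)$. First, your classification of central subsections rests on a false criterion: whether $b_v$ is nilpotent is governed by $\C_{\mathcal{F}}(\langle v\rangle)$, i.e.\ by whether a nontrivial power of $\alpha$ centralizes $v$ (for central $v$, equivalently $v\in\langle y^p\rangle$), and \emph{not} by the isomorphism type of $D/\langle v\rangle$. Concretely, take $e(B)>1$, $m=n=2$ and $v=x^py^p\in\Z(D)$: then $D'=\langle x^p\rangle\nleq\langle v\rangle$ and $D/\langle v\rangle\cong C_{p^2}\rtimes C_p$ is again metacyclic minimal non-abelian, yet $\alpha^i(v)=x^{r^ip}y^p\ne v$ for $0<i<e(B)$, so $b_v$ is nilpotent and $l(b_v)=1$. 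Your recipe puts this $v$ in the ``induction hypothesis'' case; but the hypothesis and the induction only cover blocks with inertial index $e$, whereas $e(\overline{b_v})=1$ here, so applied blindly they yield the wrong value $l(b_v)=e$, and a count organized by quotient type overcounts the sum. The paper avoids this by first proving that $\C_{\mathcal{F}}(\langle v\rangle)$ nontrivial forces $v$ to be $D$-conjugate into $\langle y\rangle$, and applying induction/Dade only to those classes. Second, the enumeration you defer as ``the delicate step'' is precisely the computational core of the proof: one must show the $p^n$ powers of $y$ are pairwise non-$\mathcal{F}$-conjugate, that $D$ has $p^{n+m-3}(p^2+p-1)$ conjugacy classes, and that $\langle\alpha\rangle$ permutes the classes not meeting $\langle y\rangle$ freely, giving $(p^2+p-p^{3-m}-1)p^{n+m-3}/e(B)$ nilpotent classes. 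The appeal to ``reproducing the lower bound of Proposition~\ref{inequ}'' does not bypass this, since identifying that lower bound with $\sum_v e(b_v)$ requires exactly the same class-counting.
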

\begin{proof}
We use induction on $n$. In case $n=1$ the result follows from Theorem~3.2 in \cite{SambaleHZC}, Theorem~\ref{AMC} and Proposition~\ref{k2}.

Now let $n\ge 2$. Let $\mathcal{R}$ be a set of representatives for the $\mathcal{F}$-conjugacy classes of elements of $D$. We are going to use Theorem~5.9.4 in \cite{Nagao}. For $1\ne u\in\mathcal{R}$, $b_u$ has metacyclic defect group $\C_D(u)$ and fusion system $\C_{\mathcal{F}}(\langle u\rangle)$. If $\C_{\mathcal{F}}(\langle u\rangle)$ is non-trivial, $\alpha\in\Aut_{\mathcal{F}}(D)$ centralizes a $D$-conjugate of $u$. Hence, we may assume that $u\in\langle y\rangle$ in this case. If $\langle u\rangle=\langle y\rangle$, then $b_u$ dominates a block $\overline{b_u}$ of $\C_G(u)/\langle u\rangle$ with cyclic defect group $\C_D(u)/\langle u\rangle$. Hence, $l(b_u)=l(\overline{b_u})=e(B)$. Now suppose that $\langle u\rangle<\langle y\rangle$. Then by induction we obtain $l(b_u)=l(\overline{b_u})=e(B)$. Finally assume that $\C_{\mathcal{F}}(\langle u\rangle)$ is trivial. Then $b_u$ is nilpotent and $l(b_u)=1$. It remains to determine $\mathcal{R}$. The powers of $y$ are pairwise non-conjugate in $\mathcal{F}$. As in the proof of Proposition~\ref{k2}, $D$ has precisely $p^{n+m-3}(p^2+p-1)$ conjugacy classes. 
Let $C$ be one of these classes which do not intersect $\langle y\rangle$. 
Assume $\alpha^i(C)=C$ for some $i\in\mathbb{Z}$ such that $\alpha^i\ne 1$. Then there are elements $u\in C$ and $w\in D$ such that $\alpha^i(u)=wuw^{-1}$. Hence $\gamma:=w^{-1}\alpha^i\in\N_G(D,b_D)\cap\C_G(u)$. Since $\gamma$ is not a $p$-element, we 
conclude that $u$ is conjugate to a power of $y$ which was excluded. This shows that no nontrivial power of $\alpha$ can fix $C$ as a set. Thus, all these conjugacy classes split in 
\[\frac{p^2+p-p^{3-m}-1}{e(B)}p^{n+m-3}\]
orbits of length $e(B)$ under the action of $\Out_{\mathcal{F}}(D)$.
Now Theorem~5.9.4 in \cite{Nagao} implies
\[k(B)-l(B)=\biggl(\frac{p^{m-1}+p^{m-2}-p^{m-3}-1}{e(B)}+e(B)\biggr)p^n-e(B).\]
By Proposition~\ref{lBoben} it follows that 
\begin{equation}\label{k+}
k(B)\le \biggl(\frac{p^m+p^{m-1}-p^{m-2}-p}{e(B)}+e(B)p\biggr)p^{n-1}+e(B)-1.
\end{equation}
By Proposition~\ref{inequ} the left hand side of \eqref{k+} is divisible by $p^{n-1}$. Since $e(B)-1<p^{n-1}$, we obtain the exact value of $k(B)$. It follows that $l(B)=e(B)$. Finally, Theorem~\ref{AMC} and Proposition~\ref{k2} give $k_i(B)$.
\end{proof}

For $p=3$, Proposition~\ref{lBoben} implies $l(B)\le 3$. Here we are able to determine all block invariants. 

\begin{Theorem}\label{p3}
Let $B$ be a non-nilpotent $3$-block of a finite group with metacyclic, minimal non-abelian defect groups. Then
\begin{align*}
k_0(B)&=\frac{3^{m-2}+1}{2}3^{n+1},&k_1(B)&=3^{m+n-3},\\
k(B)&=\frac{11\cdot3^{m-2}+9}{2}3^{n-1},&l(B)&=e(B)=2
\end{align*}
with the notation from \eqref{presmet}.
\end{Theorem}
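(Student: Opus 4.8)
The plan is to prove Theorem~\ref{p3} by applying the reduction method of Proposition~\ref{reduction} with $p=3$ and $e=e(B)=2$. The crucial hypothesis of that proposition is that $l(B)=2$ holds for every block with $e(B)=2$ and defect group of the special form where $y$ has order $p=3$ (i.e.\ the case $n=1$). Once this base case is secured, Proposition~\ref{reduction} immediately yields $k_0(B)$, $k_1(B)$, $k(B)$ and $l(B)$ for all $n\ge1$, and substituting $p=3$, $l=m-1$, $e(B)=2$ into the formulas there produces exactly the stated expressions: for instance $k_0(B)=\bigl(\tfrac{3^{m-1}-1}{2}+2\bigr)3^n=\tfrac{3^{m-2}+1}{2}3^{n+1}$, and similarly for the others.

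So the real content is the case $n=1$, where I must show $l(B)=e(B)=2$. First I would note that by Proposition~\ref{lBoben} we already have $e(B)\le l(B)\le 2e(B)-1$, which for $e(B)=2$ forces $l(B)\in\{2,3\}$. Thus the entire theorem reduces to \emph{excluding} $l(B)=3$. To do this I would return to the lower-defect-group analysis underlying Proposition~\ref{lBoben}: by Lemma~\ref{lowermeta} the only candidate lower defect groups are $D$, $\langle y\rangle$ and $\langle y^p\rangle$, but for $n=1$ we have $\langle y^p\rangle=1$, so the bound $l(B)\le 2e(B)-1$ comes from $m(|D|)=1$ together with $m(p^n)=e(B)-1=1$. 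The value $l(B)=3$ would require the Cartan matrix to have elementary divisors $1,1,|D|$ with the two $1$'s both present. The strategy for the exclusion is therefore to show that the multiplicity $m(1)$ of the elementary divisor $1$ cannot be as large as $2$.

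The main obstacle will be ruling out this extremal configuration $l(B)=3$. I would exploit the $p=3$ arithmetic directly: with $e(B)=2$ the inertial quotient $\Out_{\mathcal{F}}(D)=\langle\alpha\rangle$ has order exactly $2=p-1$, so $\alpha$ inverts $D/\mathfrak{foc}(B)$ in a very rigid way. One promising route is to combine the known value of $k(B)-l(B)$ from Theorem~3.2 in \cite{SambaleHZC} with the precise value of $k_0(B)$ from Theorem~\ref{AMC} and the height bound $k_1(B)=k(B)-k_0(B)$ from Proposition~\ref{k2}: these pin down $k(B)$ and hence $l(B)=k(B)-(k(B)-l(B))$ up to the small ambiguity, and a divisibility or integrality check (the analogue of the $p^{n-1}$-divisibility argument used in the proof of Proposition~\ref{reduction}, here with $p^{n-1}=1$) together with $p=3$ should eliminate $l(B)=3$. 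If a purely numerical argument is not decisive, I would instead refine the lower-defect-group count by showing that the block $b_y$ of $\C_G(y)$, which has cyclic defect $\C_D(y)/\langle y\rangle$ and $e(b_y)=2$, contributes at most one elementary divisor $1$ to $C$, using Dade's description of blocks with cyclic defect groups as in the proof of Proposition~\ref{lBoben}. Either way, once $l(B)=2$ is established for $n=1$, the whole theorem follows by invoking Proposition~\ref{reduction}.
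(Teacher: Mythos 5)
Your reduction to the case $n=1$ via Proposition~\ref{reduction} and the observation that Proposition~\ref{lBoben} leaves only the dichotomy $l(B)\in\{2,3\}$ both agree with the paper (its proof indeed opens by invoking Proposition~\ref{reduction}). But the entire difficulty of the theorem lies exactly where your proposal is vaguest: excluding $l(B)=3$, and neither of your two suggested routes can succeed. For the numerical route: by Theorem~3.4 in \cite{SambaleHZC} (cited in the paper's proof) the alternative to the claim is precisely $l(B)=3$ together with $k_1(B)=3^{m-2}+1$, and this configuration passes \emph{every} constraint you have at your disposal. Indeed $k_0(B)=(3^m+9)/2$ is the same in both cases (Theorem~\ref{AMC} holds unconditionally), the value of $k(B)-l(B)$ from Theorem~3.2 in \cite{SambaleHZC} is respected by construction, $k(B)=k_0(B)+k_1(B)$ holds (Proposition~\ref{k2}), all inequalities of Proposition~\ref{inequ} are satisfied, and the divisibility $p^{n-1}\mid k_1(B)$ is vacuous precisely because $p^{n-1}=1$ when $n=1$ --- the very fact you concede. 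So no integrality or divisibility argument can separate $l(B)=2$ from $l(B)=3$. For the lower-defect-group route: the Dade-theoretic analysis of $b_y$ (as in the proof of Proposition~\ref{lBoben}) only controls $m(3)$, giving $m(3)=e(B)-1=1$; the remaining ambiguity is the multiplicity $m(1)$ of the \emph{trivial} subgroup as a lower defect group, i.e.\ whether $1$ occurs as an elementary divisor of the Cartan matrix of $B$. This cannot be settled by Lemma~\ref{localLDG} or Lemma~\ref{centralLDG}, because for $R=1$ the associated ``local'' block $B_R$ is $B$ itself and $\pcore_p(\Z(G))$ need not be nontrivial; refining the $b_y$ count therefore buys nothing.

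What the paper actually does at this point is a genuinely different and much harder argument, which occupies the bulk of its proof: assuming $l(B)=3$ and $k_1(B)=3^{m-2}+1$, it analyzes the generalized decomposition numbers $d^z$ and $d^x$ attached to the subsections $(z,b_z)$ with $z=x^3$ and $(x,b_x)$. These are expanded over roots of unity with integral coefficient vectors $a_i$, respectively $\overline{a}_i$; equality in the quadratic-form bound of Theorem~4.10 in \cite{HKS} (with the form of Dynkin type $A_{3^{m-2}}$) forces $q(a(\chi))=3^{2h(\chi)}$ and $d^x(\chi)=0$ for height-one characters; a Galois (i.e.\ $3$-rationality) argument pins down that exactly one character of height $1$ is $3$-rational; the free action of $D/\mathfrak{foc}(B)\cong C_3$ on $\Irr_0(B)$ from \cite{RobinsonFocal} then shows that $(a_0,\overline{a}_0)$ is odd; and finally the orthogonality relation $(d^z,d^{x^j})=0$ for $3\nmid j$ combined with Galois theory gives $(a_0,\overline{a}_0)=0$, a parity contradiction. (The paper also quotes Theorem~3.7 in \cite{SambaleHZC} for $m\le 3$ and extends that proof to $m\ge 4$.) None of this machinery, nor any substitute for it, appears in your proposal, so the central step of the theorem is missing.
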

\begin{proof}
By Proposition~\ref{reduction} it suffices to settle the case $n=1$. Here the claim holds for $m\le 3$ by Theorem~3.7 in \cite{SambaleHZC}. We will extend the proof of this result in order to handle the remaining $m\ge 4$. 
Since $B$ is non-nilpotent, we have $e(B)=2$. By Theorem~\ref{AMC} we know $k_0(B)=(3^m+9)/2$.
By way of contradiction we may assume that $l(B)=3$ and $k_1(B)=3^{m-2}+1$ (see Theorem~3.4 in \cite{SambaleHZC}). 

We consider the generalized decomposition numbers $d^z_{\chi\phi_z}$ where $z:=x^3\in\Z(D)$ and $\phi_z$ is the unique irreducible Brauer character of $b_z$. Let $d^z:=(d^z_{\chi\phi_z}:\chi\in\Irr(B))$. By the orthogonality relations we have $(d^z,d^z)=3^{m+1}$. As in \cite[Section~4]{HKS} we can write 
\[d^z=\sum_{i=0}^{2\cdot 3^{m-2}-1}{a_i\zeta_{3^{m-1}}^i}\]
for integral vectors $a_i$ and a primitive $3^{m-1}$-th root of unity $\zeta_{3^{m-1}}\in\mathbb{C}$.
Since $z$ is $\mathcal{F}$-conjugate to $z^{-1}$, the vector $d^z$ is real. Hence, the vectors $a_i$ are linearly dependent. More precisely, it turns out that the vectors $a_i$ are spanned by $\{a_j:j\in J\}$ for a subset $J\subseteq\{0,\ldots,2\cdot3^{m-2}-1\}$ such that $0\in J$ and $|J|=3^{m-2}$.

Let $q$ be the quadratic form corresponding to the Dynkin diagram of type $A_{3^{m-2}}$. We set $a(\chi):=(a_j(\chi):j\in J)$ for $\chi\in\Irr(B)$. Since the subsection $(z,b_z)$ gives equality in Theorem~4.10 in \cite{HKS}, we have
\[k_0(B)+9k_1(B)=\sum_{\chi\in\Irr(B)}{q(a(\chi))}\]
for a suitable ordering of $J$. 
This implies $q(a(\chi))=3^{2h(\chi)}$ for $\chi\in\Irr(B)$ where $h(\chi)$ is the height of $\chi$.
Moreover, if $a_0(\chi)\ne 0$, then $a_0(\chi)=\pm3^{h(\chi)}$ by Lemma~3.6 in \cite{SambaleHZC}.
By Lemma~4.7 in \cite{HKS} we have $(a_0,a_0)=27$.

In the next step we determine the number $\beta$ of $3$-rational characters of of height $1$. 
Since $(a_0,a_0)=27$, we have $\beta<4$. 
On the other hand, the Galois group $\mathcal{G}$ of $\mathbb{Q}(\zeta_{\zeta_{3^{m-1}}})\cap\mathbb{R}$ over $\mathbb{Q}$ acts on $d^z$ and the length of every non-trivial orbit is divisible by $3$ (because $\mathcal{G}$ is a $3$-group). This implies $\beta=1$, since $k_1(B)=3^{m-2}+1$. 

In order to derive a contradiction, we repeat the argument with the subsection $(x,b_x)$. Again we get equality in Theorem~4.10, but this time for $k_0(B)$ instead of $k_0(B)+9k_1(B)$. Hence, $d^x(\chi)=0$ for characters $\chi\in\Irr(B)$ of height $1$. Again we can write $d^x=\sum_{i=0}^{2\cdot3^{m-1}-1}{\overline{a_i}\zeta_{3^m}^i}$ where $\overline{a_i}$ are integral vectors. Lemma~4.7 in \cite{HKS} implies $(\overline{a}_0,\overline{a}_0)=9$. 
Using Lemma~3.6 in \cite{SambaleHZC} we also have $\overline{a}_0(\chi)\in\{0,\pm1\}$. By Proposition~3.3 in \cite{SambaleHZC} we have precisely three $3$-rational characters $\chi_1,\chi_2,\chi_3\in\Irr(B)$ of height $0$ (note that altogether we have four $3$-rational characters). Then $a_0(\chi_i)=\pm\overline{a}_0(\chi_i)=\pm1$ for $i=1,2,3$. By \cite[Section~1]{RobinsonFocal} we have $\lambda*\chi_i\in\Irr_0(B)$ and $(\lambda*\chi_i)(u)=\chi_i(u)=d^u_{\chi_i\phi_u}\phi_u(1)$ for $\lambda\in\Irr(D/\mathfrak{foc}(B))\cong C_3$ and $u\in\{x,z\}$. Since this action on $\Irr_0(B)$ is free, we have nine characters $\psi\in\Irr(B)$ such that $a_0(\psi)=\pm\overline{a}_0(\psi)=\pm1$. In particular $(a_0,\overline{a_0})\equiv 1\pmod{2}$. By the orthogonality relations we have $(d^z,d^{x^j})=0$ for all $j\in\mathbb{Z}$ such that $3\nmid j$. Using Galois theory we get the final contradiction $0=(d^z,\overline{a}_0)=(a_0,\overline{a}_0)\equiv 1\pmod{2}$. 
\end{proof}

In the smallest case $D\cong C_9\rtimes C_3$ of Theorem~\ref{p3} even more information on $B$ were given in Theorem~4.5 in \cite{SambaleHZC}.

\begin{Corollary}\label{AWC}
Alperin's Weight Conjecture and the Ordinary Weight Conjecture are satisfied for every $3$-block with metacyclic, minimal non-abelian defect groups.
\end{Corollary}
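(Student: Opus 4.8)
The plan is to deduce both conjectures from the explicit invariants of Theorem~\ref{p3} (the nilpotent case being immediate) by matching them against the local predictions. The key reduction rests on the Brauer correspondent $b$ of $B$ in $\N_G(D)$: since $\mathcal{F}$ is controlled and $D\trianglelefteq\N_G(D)$, the block $b$ has normal defect group $D$ and fusion system $\N_{\mathcal{F}}(D)=\mathcal{F}$, with $e(b)=e(B)$. Applying Theorem~\ref{p3} to $b$ therefore gives $l(b)=l(B)$ and $k_i(b)=k_i(B)$. Because the inertial quotient $E:=\Out_{\mathcal{F}}(D)$ is cyclic of order $e(B)\mid p-1$, the associated $2$-cocycle is trivial, so by Külshammer's theorem $b$ is Morita equivalent to the group algebra of $D\rtimes E$, a $p$-solvable group; thus both weight conjectures are accessible for $b$, and the task is to transport them to $B$ through the shared local structure.

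For the Alperin Weight Conjecture I would first pin down the $\mathcal{F}$-centric radical subgroups. A subgroup $Q\le D$ is $\mathcal{F}$-centric only if $\C_D(Q)\le Q$, which forces $\Z(D)\le Q$; as $D/\Z(D)\cong C_p\times C_p$, the only candidates are $\Z(D)$, the $p+1$ maximal subgroups $M\supseteq\Z(D)$, and $D$ itself. Here $\Z(D)$ is not centric since $\C_D(\Z(D))=D$, while each such $M$ is abelian with $\Aut_D(M)=D/M\cong C_p$; the $p'$-automorphism $\alpha$ normalises this cyclic Sylow $p$-subgroup, so $C_p\le\pcore_p(\Out_{\mathcal{F}}(M))\neq1$ and $M$ is not radical. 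Hence $D$ is the unique $\mathcal{F}$-centric radical subgroup, and the number of $B$-weights equals the number of weights at $D$, namely the number of defect-zero characters of $\N_G(D,b_D)/D$ in the block determined by $b_D$. Over the cyclic $p'$-group $E$ this count is $\lvert\Irr(E)\rvert=e(B)$, which by Theorem~\ref{p3} equals $l(B)$; equivalently, these weights coincide with those of $b$, for which the Alperin Weight Conjecture holds. This settles the Alperin version.

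For the Ordinary Weight Conjecture the same reduction applies: its defining alternating sum runs over $\mathcal{F}$-centric chains, and since $\mathcal{F}$ is controlled with $D$ the unique $\mathcal{F}$-centric radical subgroup, the sum collapses to the single contribution of $D$, governed entirely by $D$ and $E=\Out_{\mathcal{F}}(D)$. This is precisely the prediction for the normal-defect block $b$, whose height distribution can be read off from the $D\rtimes E$-model by Clifford theory over $E$. Comparing the resulting numbers with the explicit $k_0(B)=\tfrac{3^{m-2}+1}{2}3^{n+1}$ and $k_1(B)=3^{m+n-3}$ of Theorem~\ref{p3} confirms the conjecture.

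I expect the Ordinary Weight Conjecture to be the main obstacle. Counting defect-zero simple modules for the Alperin version is insensitive to heights and reduces cleanly to $\lvert\Irr(E)\rvert$, whereas the ordinary version tracks the full height distribution, so one must carry out the degree-and-defect bookkeeping over the inertial quotient, justify the vanishing of the relevant cohomology class, and verify the collapse of the chain sum to the $D$-term. The elementary but essential input underpinning both arguments is the identification of $D$ as the unique $\mathcal{F}$-centric radical subgroup, which is exactly what makes the controlled fusion system tractable here.
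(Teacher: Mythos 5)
Your proposal is correct, and its skeleton coincides with the paper's: both proofs exploit that $\mathcal{F}$ is controlled and that $E=\Out_{\mathcal{F}}(D)$ is cyclic (so all Külshammer--Puig $2$-cocycles vanish) to reduce both conjectures to the normal-defect-group model $D\rtimes E$, and then match numbers via Theorem~\ref{p3}. The differences are in execution. For Alperin's Weight Conjecture the paper simply cites the reformulation valid for controlled blocks, namely that the conjecture asserts $l(B)=l(B_D)$, and notes both sides equal $e(B)$; you instead re-derive this reduction by classifying the $\mathcal{F}$-centric radical subgroups. Your classification is sound: centric subgroups contain $\Z(D)$, and each intermediate maximal subgroup $M$ is abelian with $\Aut_D(M)\cong C_p$ normal in $\Aut_{\mathcal{F}}(M)$ (normality holds because in a controlled system every element of $\Aut_{\mathcal{F}}(M)$ is the restriction of some element of $\N_{\Aut_{\mathcal{F}}(D)}(M)$ and $\Inn(D)\trianglelefteq\Aut_{\mathcal{F}}(D)$), so $D$ is indeed the unique centric radical. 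For the Ordinary Weight Conjecture the paper avoids all computation by citing that the conjecture is known for the principal block of the solvable group $D\rtimes C_{e(B)}$, whose fusion system equals that of $B$; together with Theorem~\ref{p3} this finishes the proof with no bookkeeping. You instead propose to evaluate the collapsed sum directly: since $E$ is a $p'$-group only the trivial chain survives, and the prediction becomes $k_d(B)=\sum_{\mu}\lvert\Irr(E_\mu)\rvert$ over $E$-orbits of $\mu\in\Irr(D)$ of defect $d$. This does check out --- for $e(B)=2$ the $3^n$ linear characters trivial on $\langle x\rangle$ are $E$-fixed and the remaining $3^{m+n-1}-3^n$ lie in free orbits, giving
\[2\cdot 3^n+\frac{3^{m+n-1}-3^n}{2}=\frac{3^{m-2}+1}{2}3^{n+1}=k_0(B),\]
and the $2\cdot 3^{m+n-3}$ characters of degree $3$ fall into free orbits, giving $k_1(B)=3^{m+n-3}$ --- but this is exactly the bookkeeping you defer, and it is the one step you would still have to write out, whereas the paper's appeal to the solvable case renders it unnecessary. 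In short, both routes are valid: yours is more self-contained and makes the weight-theoretic content explicit; the paper's is shorter because it outsources precisely those two steps to the literature.
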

\begin{proof}
Let $D$ be a defect group of $B$. Since $B$ is controlled, Alperin's Weight Conjecture asserts that $l(B)=l(B_D)$ where $B_D$ is a Brauer correspondent of $B$ in $\N_G(D)$. Since both numbers equal $e(B)$, the conjecture holds.

Now we prove the Ordinary Weight Conjecture in the form of \cite[Conjecture~IV.5.49]{AKO}. Since $\Out_{\mathcal{F}}(D)$ is cyclic, all $2$-cocycles appearing in this version are trivial. Therefore the conjecture asserts that $k_i(B)$ only depends on $\mathcal{F}$ and thus on $e(B)$. Since the conjecture is known to hold for the principal block of the solvable group $G=D\rtimes C_{e(B)}$, the claim follows.
\end{proof}

We remark that Alperin's Weight Conjecture is also true for the abelian defect groups $D\cong C_{3^n}\times C_{3^m}$ where $n\ne m$ (see \cite{Usami23I,UsamiZ2Z2}).


We observe another consequence for arbitrary defect groups.

\begin{Corollary}
Let $B$ be a $3$-block of a finite group with defect group $D$. Suppose that $D/\langle z\rangle$ is metacyclic, minimal non-abelian for some $z\in\Z(D)$. Then Brauer's $k(B)$-Conjecture holds for $B$, i.\,e. $k(B)\le |D|$.
\end{Corollary}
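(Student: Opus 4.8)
The plan is to reduce the assertion to a block whose defect group is the metacyclic, minimal non-abelian group $\overline{D}:=D/\langle z\rangle$ treated in the previous sections, and then to transfer Brauer's bound upward through the central element $z$ by means of the major subsection attached to $z$.

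First I would record that Brauer's $k(B)$-Conjecture is already available for the quotient defect group. Every $3$-block with metacyclic defect group satisfies $k(\cdot)\le|\cdot|$ (one of the results recalled in the introduction), and for the non-nilpotent case with a metacyclic, minimal non-abelian defect group one may alternatively read off $k\le|\overline{D}|$ directly from Theorem~\ref{p3}, while in the nilpotent case $k$ equals the class number $k(\overline{D})<|\overline{D}|$. Thus any $3$-block with defect group $\overline{D}$ obeys Brauer's Conjecture.

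Next I would set up the central reduction. Since $z\in\Z(D)$ we have $\C_D(z)=D$, so the major subsection $(z,b_z)$ has $b_z\in\Bl(\C_G(z))$ with defect group $D$; moreover $b_z$ dominates a block $\overline{b_z}$ of $\C_G(z)/\langle z\rangle$ whose defect group is $D/\langle z\rangle=\overline{D}$, so by the previous step $k(\overline{b_z})\le|\overline{D}|$. The decisive point is then the inequality
\[k(B)\le|\langle z\rangle|\,k(\overline{b_z}),\]
after which $k(B)\le|\langle z\rangle|\,|\overline{D}|=|D|$ finishes the proof. To obtain it I would work with the generalized decomposition numbers $d^z_{\chi\phi}$ ($\chi\in\Irr(B)$, $\phi\in\IBr(b_z)$) of the major subsection $(z,b_z)$, using that their column orthogonality is governed by the Cartan matrix of $b_z$, and that domination by the central subgroup $\langle z\rangle$ multiplies the Cartan matrix by $|\langle z\rangle|$, i.e. $C_{b_z}=|\langle z\rangle|\,C_{\overline{b_z}}$. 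Expanding each $d^z_{\chi\phi}$ over a $\mathbb{Z}$-basis of $\mathbb{Z}[\zeta]$, with $\zeta$ a primitive $|\langle z\rangle|$-th root of unity, and counting the rows $\chi$ against the $l(\overline{b_z})$ columns should then yield the displayed bound.

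The main obstacle is exactly this last inequality: the generalized decomposition numbers link $\Irr(B)$ to $\IBr(b_z)=\IBr(\overline{b_z})$ rather than directly to $\Irr(\overline{b_z})$, so one must argue carefully that no more than $|\langle z\rangle|\,k(\overline{b_z})$ rows $\chi$ can occur, i.e. package the central-subsection count into the clean factor $|\langle z\rangle|$. Everything else—the domination step, the Cartan relation, and the citation of Brauer's Conjecture for $\overline{D}$—is routine, so the crux is establishing that reduction inequality.
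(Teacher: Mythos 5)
Your reduction setup is fine as far as it goes: the major subsection $(z,b_z)$, the domination $C_{b_z}=|\langle z\rangle|\,C_{\overline{b_z}}$, and the observation that Theorem~\ref{p3} yields Brauer's bound $k(\overline{b_z})\le|D/\langle z\rangle|$ are all correct. The gap is exactly the step you call the crux: the inequality $k(B)\le|\langle z\rangle|\,k(\overline{b_z})$ is not proved, and the method you sketch cannot prove it. Expanding the generalized decomposition numbers over a $\mathbb{Z}$-basis of $\mathbb{Z}[\zeta]$ and ``counting rows against columns'' gives, after applying the trace of $\mathbb{Q}(\zeta)|\mathbb{Q}$, only the following: each of the $k(B)$ nonzero rows contributes at least $\phi(|\langle z\rangle|)$, while the column sums are governed by the Cartan matrix, giving a total of $\phi(|\langle z\rangle|)\,|\langle z\rangle|\operatorname{tr}C_{\overline{b_z}}$. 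Hence the count yields $k(B)\le|\langle z\rangle|\operatorname{tr}C_{\overline{b_z}}$, and $\operatorname{tr}C_{\overline{b_z}}$ is in general strictly larger than both $k(\overline{b_z})$ and $|D/\langle z\rangle|$ as soon as $l(\overline{b_z})\ge 2$; for instance a block with cyclic defect group of order $p$ and inertial index $2$ has Cartan matrix $\frac{1}{2}\bigl(\begin{smallmatrix}p+1&p-1\\p-1&p+1\end{smallmatrix}\bigr)$ of trace $p+1>p$. So the naive count does not even recover $k(B)\le|D|$, let alone your stronger inequality. Passing to a smarter pairing does not help either, because Brauer's classical inequality $k(\overline{b_z})\le\sum_{i\le j}q_{ij}\overline{c}_{ij}$ (valid for \emph{every} positive definite integral quadratic form $q$) goes in the wrong direction: to get your inequality you would need a form for which this is an equality, which is essentially an open problem of Brauer.

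There is also a structural reason why your ``decisive point'' cannot be routine: if $k(B)\le|\langle z\rangle|\,k(\overline{b_z})$ held for all blocks and all $z\in\Z(D)$, then Brauer's $k(B)$-Conjecture would follow in full generality by induction on $|D|$ (take $z\in\Z(D)$ of order $p$; the block $\overline{b_z}$ has defect group $D/\langle z\rangle$, so $k(\overline{b_z})\le|D|/p$ by induction, whence $k(B)\le|D|$). Your proposal thus reduces the corollary to a statement at least as strong as the full conjecture. The paper's proof uses a different invariant of the quotient block: not the bound on \emph{ordinary} characters $k(\overline{b_z})$, but the bound on \emph{Brauer} characters, $l(b_z)=l(\overline{b_z})\le 2$, which Theorem~\ref{p3} provides ($l=2$ in the non-nilpotent case, $l=1$ in the nilpotent one). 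It then cites Theorem~2.1 in \cite{Sambalefurther}: a major subsection $(z,b_z)$ with $l(b_z)\le 2$ forces $k(B)\le|D|$. That theorem is precisely the refined version of your counting argument -- it chooses a (generally non-diagonal) positive definite quadratic form adapted to the actual $2\times 2$ Cartan matrix of $\overline{b_z}$, using reduction theory of binary forms -- and the hypothesis $l\le 2$ is what makes such a form available. To repair your argument, replace the unproved reduction inequality by the appeal to $l(\overline{b_z})\le 2$ and this result.
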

\begin{proof}
Let $(z,b_z)$ be a major subsection of $B$. Then $b_z$ dominates a block $\overline{b_z}$ of $\C_G(z)/\langle z\rangle$ with metacyclic, minimal non-abelian defect group $D/\langle z\rangle$. Hence, Theorem~\ref{p3} implies $l(b_z)=l(\overline{b_z})\le 2$. Now the claim follows from Theorem~2.1 in \cite{Sambalefurther}.
\end{proof}

In the situation of Theorem~\ref{p3} it is straight-forward to distribute $\Irr(B)$ into families of $3$-conjugate and $3$-rational characters (cf. Proposition~3.3 in \cite{SambaleHZC}). However, it is not so easy to see which of these families lie in $\Irr_0(B)$. 

Now we turn to $p=5$.

\begin{Theorem}\label{p5}
Let $B$ be a $5$-block of a finite group with non-abelian defect group $C_{25}\rtimes C_{5^n}$
where $n\ge 1$. Then
\begin{align*}
k_0(B)&=\biggl(\frac{4}{e(B)}+e(B)\biggr)5^n,&k_1(B)&=\frac{4}{e(B)}5^{n-1},\\
k(B)&=\biggl(\frac{24}{e(B)}+5e(B)\biggr)5^{n-1},&l(B)&=e(B).
\end{align*}
\end{Theorem}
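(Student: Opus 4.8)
The plan is to reduce everything to Proposition~\ref{reduction}. Specialising that proposition to $p=5$, $m=2$ and $e=e(B)\in\{1,2,4\}$, its conclusion is \emph{precisely} the four displayed formulas of the theorem for arbitrary $n\ge 1$. Hence it suffices to verify its hypothesis, namely that $l(B)=e(B)$ for every block $B$ whose defect group is the extraspecial group
\[D_0=\langle x,y\mid x^{25}=y^5=1,\ yxy^{-1}=x^6\rangle\]
of order $125$ and exponent $25$, i.e.\ the case $n=1$. This base case is the whole content of the proof; once it is established, Proposition~\ref{reduction} produces $k_0(B)$, $k_1(B)$, $k(B)$ and $l(B)$ for all $n$.

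For the base case I would first dispose of $e(B)=1$, where $B$ is nilpotent and $l(B)=1=e(B)$, and then assume $e(B)\in\{2,4\}$. Proposition~\ref{lBoben} gives $e(B)\le l(B)\le 2e(B)-1$, so only the values $l(B)>e(B)$ remain. Combining $k_0(B)=(\tfrac4{e(B)}+e(B))5$ from Theorem~\ref{AMC}, the value $k(B)-l(B)=\tfrac{24}{e(B)}+4e(B)$ from \cite[Theorem~3.2]{SambaleHZC}, and $k(B)=k_0(B)+k_1(B)$ from Proposition~\ref{k2}, one finds $k_1(B)=l(B)+\tfrac4{e(B)}-e(B)$. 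Thus $l(B)=e(B)$ is equivalent to $k_1(B)=4/e(B)$, and I must exclude $k_1(B)>4/e(B)$; concretely $k_1(B)=3$ for $e(B)=2$, and $k_1(B)\in\{2,3,4\}$ for $e(B)=4$.

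To do so I would imitate the proof of Theorem~\ref{p3}, using the machinery of \cite{HKS} for the central subsection $(z,b_z)$, $z:=x^5\in\Z(D_0)$ of order $5$. Since $\alpha$ does not fix $z$, the block $b_z$ is nilpotent of defect group $\C_{D_0}(z)=D_0$, so its Cartan matrix is $(125)$ and the column $d^z$ satisfies $(d^z,d^z)=125$; writing $d^z=\sum_i a_i\zeta_5^i$, the relevant quadratic form has type $A_{4/e(B)}$, its rank being the degree over $\mathbb Q$ of the field fixed by the image of $\Out_{\mathcal F}(D_0)$ in $\Gal(\mathbb Q(\zeta_5)/\mathbb Q)$. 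For $e(B)=2$ we have $\alpha(z)=z^{-1}$, hence $d^z$ is real, the field is $\mathbb Q(\zeta_5)^+=\mathbb Q(\sqrt5)$ and the form is $A_2$; here the argument runs in close parallel to Theorem~\ref{p3}, using in addition the cyclic subsection $(x,b_x)$ (with $b_x$ nilpotent of cyclic defect group $\langle x\rangle\cong C_{25}$ and $(d^x,d^x)=25$), the orthogonality $(d^z,d^{x^j})=0$ for $5\nmid j$, the coefficient divisibility of \cite[Lemma~3.6]{SambaleHZC}, and the free action of $\Irr(D_0/\mathfrak{foc}(B))\cong C_5$ on $\Irr_0(B)$, to reach a contradiction from $k_1(B)=3$.

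The main obstacle is $e(B)=4=p-1$. Here every nontrivial power of $z$, and every generator of $\langle x\rangle$, is $\mathcal F$-conjugate to $z$ resp.\ $x$, so both $d^z$ and $d^x$ are Galois-fixed and hence rational; the quadratic form degenerates to type $A_1$, and Proposition~\ref{lBoben} leaves the three values $l(B)\in\{5,6,7\}$ to exclude instead of a single one, so the plain major-subsection count no longer isolates $k_1(B)$. I would resolve this by combining the two rational columns: the relation $(d^x,d^x)=25$ together with $d^x_\chi=0$ on the height-one characters forces $d^x_\chi=\pm1$ on the $k_0(B)=25$ height-zero characters, while $(d^z,d^z)=125$ controls $d^z$; feeding these into the orthogonality $(d^z,d^x)=0$ and tracking both columns under the $C_5$-action on $\Irr_0(B)$ should yield a congruence obstruction ruling out $k_1(B)\in\{2,3,4\}$, exactly in the spirit of the parity argument closing the proof of Theorem~\ref{p3}. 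I expect this combinatorial bookkeeping for $e(B)=4$ to be the delicate point of the whole proof.
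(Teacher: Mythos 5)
Your overall architecture agrees with the paper: reduce to $n=1$ via Proposition~\ref{reduction}, dispose of $e(B)=1$ (nilpotent), and treat $e(B)=2$ separately (the paper simply quotes Theorem~4.4 in \cite{SambaleHZC} for this case, whereas you propose to re-prove it along the lines of Theorem~\ref{p3} --- harmless, if more laborious). The genuine gap is exactly where you place ``the delicate point'': for $e(B)=4$ you never produce the contradiction, you only assert that combining $d^z$, $d^x$, their orthogonality and the $C_5$-action ``should yield a congruence obstruction''. That is a plan, not a proof, and your sketch moreover leans on two inputs you do not justify. First, the claim that $d^x_\chi=\pm1$ on $\Irr_0(B)$ and $d^x_\chi=0$ on height-one characters is not free: in the proof of Theorem~\ref{p3} the analogous facts come from an equality in Theorem~4.10 of \cite{HKS}, and you would have to verify that this equality argument survives the degeneration to the rational (rank-one) case $e(B)=4$. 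Second, starting only from Proposition~\ref{lBoben} you must also exclude $k_1(B)=4$, and this value \emph{cannot} be ruled out by the major-subsection count alone: taking all twenty-five height-zero entries of $d^z$ equal to $\pm1$ and all four height-one entries equal to $\pm5$ gives $25+100=125=(d^z,d^z)$, perfectly consistent. So for $k_1(B)=4$ your extra $d^x$-orthogonality (or a citation replacing it) is genuinely indispensable, a subtlety your sketch glosses over.

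For comparison, the paper's actual argument for $e(B)=4$ is complete and simpler than what you envision: it never touches $d^x$. It first quotes Proposition~4.2 in \cite{SambaleHZC} (with Theorem~\ref{AMC}) to get $k_0(B)=25$, $1\le k_1(B)\le 3$ and $4\le l(B)\le 6$, which disposes of the problematic value $k_1(B)=4$ at the outset. Then, assuming $k_1(B)\ge 2$: the entries of $d^z$ are integers (all nontrivial powers of $z$ are $\mathcal{F}$-conjugate), nonzero (major subsection), and divisible by $5$ at height one, hence equal to $\pm 5$ there; the height-zero entries are constant on the five orbits of length five of the free action of $D/\mathfrak{foc}(B)\cong C_5$ on $\Irr_0(B)$; writing $\alpha,\beta,\gamma$ for the numbers of orbits with entries $\pm1,\pm2,\pm3$, the relation $(d^z,d^z)=125$ becomes $\alpha+4\beta+9\gamma+5k_1(B)=25$ with $\alpha+\beta+\gamma=5$, i.e. $3\beta+8\gamma=20-5k_1(B)\in\{5,10\}$, which has no solution in non-negative integers. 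Hence $k_1(B)=1$, and $l(B)=4$ follows from Theorem~4.1 in \cite{SambaleHZC}. In short: your reduction and your identification of the relevant subsections are sound, but the decisive combinatorial step is absent from the proposal, and the route you sketch requires more (and unverified) inputs than the paper's one-column argument.
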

\begin{proof}
By Proposition~\ref{reduction} it suffices to settle the case $n=1$. Moreover by Theorem~4.4 in \cite{SambaleHZC} we may assume that $e(B)=4$. Then by Theorem~\ref{AMC} above and Proposition~4.2 in \cite{SambaleHZC} we have $k_0(B)=25$, $1\le k_1(B)\le 3$, $26\le k(B)\le 28$ and $4\le l(B)\le 6$. 
We consider the generalized decomposition numbers $d^z_{\chi\phi_z}$ where $z:=x^5\in\Z(D)$ and $\phi_z$ is the unique irreducible Brauer character of $b_z$. Since all non-trivial powers of $z$ are $\mathcal{F}$-conjugate, the numbers $d^z_{\chi\phi_z}$ are integral. Also, these numbers are non-zero, because $(z,b_z)$ is a major subsection. Moreover, $d^z_{\chi\phi_z}\equiv 0\pmod{p}$ for characters $\chi\in\Irr(B)$ of height $1$ (see Theorem~V.9.4 in \cite{Feit}). Let $d^z:=(d^z_{\chi\phi_z}:\chi\in\Irr(B))$. By the orthogonality relations we have $(d^z,d^z)=125$. 
Suppose by way of contradiction that $k_1(B)>1$. Then it is easy to see that $d^z_{\chi\phi_z}=\pm 5$ for characters $\chi\in\Irr(B)$ of height $1$. 
By \cite[Section~1]{RobinsonFocal}, 
the numbers $d^z_{\chi\phi_z}$ ($\chi\in\Irr_0(B)$) split in five orbits of length $5$ each. Let $\alpha$ (respectively $\beta$, $\gamma$) be the number of orbits of entries $\pm 1$ (respectively $\pm 2$, $\pm 3$) in $d^z$. Then the orthogonality relations reads
\[\alpha+4\beta+9\gamma+5k_1(B)=25.\]
Since $\alpha+\beta+\gamma=5$, we obtain
\[3\beta+8\gamma=20-5k_1(B)\in\{5,10\}.\]
However, this equation cannot hold for any choice of $\alpha,\beta,\gamma$. Therefore we have proved that $k_1(B)=1$. Now Theorem~4.1 in \cite{SambaleHZC} implies $l(B)=4$.
\end{proof}

\begin{Corollary}
Alperin's Weight Conjecture and the Ordinary Weight Conjecture are satisfied for every $5$-block with non-abelian defect group $C_{25}\rtimes C_{5^n}$.
\end{Corollary}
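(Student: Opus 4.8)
```latex
The plan is to mirror the structure of Corollary~\ref{AWC} exactly, since the
ingredients are now fully parallel. The essential input is Theorem~\ref{p5},
which establishes that $l(B)=e(B)$ for every $5$-block with defect group
$C_{25}\rtimes C_{5^n}$, together with the fact (recorded in
Section~2) that the fusion system $\mathcal{F}$ is controlled with
$\Out_{\mathcal{F}}(D)=\langle\alpha\rangle$ cyclic of order $e(B)\mid p-1=4$.

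First I would treat Alperin's Weight Conjecture. Since $B$ is a controlled
block, the conjecture reduces to the statement $l(B)=l(B_D)$, where $B_D$ is
the Brauer correspondent of $B$ in $\N_G(D)$. The correspondent $B_D$ has the
same defect group $D$ and the same fusion system $\mathcal{F}$, hence the same
inertial index $e(B_D)=e(B)$. Applying Theorem~\ref{p5} to $B_D$ itself (whose
defect group is again $C_{25}\rtimes C_{5^n}$) gives $l(B_D)=e(B_D)=e(B)=l(B)$,
so the two numbers coincide and the conjecture holds.

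Next I would verify the Ordinary Weight Conjecture, in the formulation of
\cite[Conjecture~IV.5.49]{AKO}. Because $\Out_{\mathcal{F}}(D)=\langle\alpha\rangle$
is cyclic, every relevant Schur multiplier vanishes, so all $2$-cocycles
occurring in this version of the conjecture are trivial. Consequently the
predicted values of $k_i(B)$ depend only on the fusion system $\mathcal{F}$,
that is, only on $e(B)$. The plan is then to exhibit one concrete block
realizing each $\mathcal{F}$ for which the conjecture is already known: namely
the principal block of the solvable group $G=D\rtimes C_{e(B)}$. Since the
Ordinary Weight Conjecture holds for principal blocks of $p$-solvable groups,
and since the locally determined quantities match those computed in
Theorem~\ref{p5}, the conjecture follows for an arbitrary $B$ with the same
fusion system.

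The main (and only genuine) obstacle is ensuring that the solvable comparison
block $G=D\rtimes C_{e(B)}$ indeed reproduces the fusion system $\mathcal{F}$
with the correct action of $\alpha$, so that the ``depends only on
$\mathcal{F}$'' principle can be invoked; this is routine here because
$\mathcal{F}$ is controlled and $e(B)\mid 4$ dictates the action on $D$ up to
the choices already fixed in Section~2. Everything else is bookkeeping
identical to the $p=3$ case in Corollary~\ref{AWC}, so I expect the argument to
occupy only a few lines.
```
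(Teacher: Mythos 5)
Your proposal is correct and follows essentially the same route as the paper: the paper's proof is literally ``See Corollary~\ref{AWC},'' i.e.\ repeat the $p=3$ argument with Theorem~\ref{p5} supplying $l(B)=e(B)$ for Alperin's Weight Conjecture (via $l(B)=e(B)=e(B_D)=l(B_D)$ for the controlled block $B$), and with the cyclic $\Out_{\mathcal{F}}(D)$ trivializing the $2$-cocycles so that the Ordinary Weight Conjecture's prediction depends only on $e(B)$ and is verified against the principal block of $D\rtimes C_{e(B)}$. Your write-up just makes these steps explicit.
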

\begin{proof}
See Corollary~\ref{AWC}.
\end{proof}

Unfortunately, the proof of Theorem~\ref{p5} does not work for $p=7$ and $e(B)=6$ (even by invoking the other generalized decomposition numbers). However, we have the following partial result. 


\begin{Proposition}
Let $p\in\{7,11,13,17,23,29\}$ and let $B$ be a $p$-block of a finite group with defect group $C_{p^2}\rtimes C_{p^n}$
where $n\ge 1$. If $e(B)=2$, then
\begin{align*}
k_0(B)&=\frac{p+3}{2}p^n,&k_1(B)&=\frac{p-1}{2}p^{n-1},\\
k(B)&=\frac{p^2+4p-1}{2}p^{n-1},&l(B)&=2.
\end{align*}
\end{Proposition}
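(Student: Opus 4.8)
The plan is to obtain the whole statement from the base case $n=1$ by means of Proposition~\ref{reduction}. Fixing $m=2$ and $e=2$, that proposition reduces everything to the single assertion that $l(B)=e(B)=2$ for every block $B$ whose defect group is $D=\langle x,y\mid x^{p^2}=y^p=1,\ yxy^{-1}=x^{1+p}\rangle$ with $e(B)=2$; the stated values of $k_0(B)$, $k_1(B)$ and $k(B)$ then come out of Theorem~\ref{AMC}, Proposition~\ref{k2} and the formula for $k(B)-l(B)$ from Theorem~3.2 in \cite{SambaleHZC}. First I would note that Proposition~\ref{lBoben} pins $l(B)$ down to $2\le l(B)\le 3$, so I may argue by contradiction and assume $l(B)=3$. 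Substituting this into $k(B)-l(B)=\frac{p^2+4p-5}{2}$ (the $m=2$, $n=1$ instance of that formula) and combining with $k_0(B)=\frac{p+3}{2}p$ from Theorem~\ref{AMC} and $k_1(B)=k(B)-k_0(B)$ from Proposition~\ref{k2}, I would obtain the candidate invariants $k_0(B)=\frac{p(p+3)}{2}$ and $k_1(B)=\frac{p+1}{2}$ that I must refute.

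The arithmetic engine is the major subsection $(z,b_z)$ with $z:=x^p$, which generates $\Z(D)=D'\cong C_p$. Because $e(B)=2$, the generating automorphism of $\Out_{\mathcal{F}}(D)$ acts on $\langle z\rangle$ as inversion; hence $b_z$ is nilpotent with a unique irreducible Brauer character $\phi_z$, and $z$ is $\mathcal{F}$-conjugate to $z^{-1}$. Thus the entries of $d^z=(d^z_{\chi\phi_z})_{\chi}$ lie in $\mathbb{Z}[\zeta_p]$ and are fixed by complex conjugation (so real), no entry vanishes since $(z,b_z)$ is major, the orthogonality relations give $(d^z,d^z)=|D|=p^3$, and by Theorem~V.9.4 in \cite{Feit} the entries belonging to height-$1$ characters are divisible by $p$. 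Writing $d^z_{\chi\phi_z}=\sum_{i=0}^{p-2}a_i(\chi)\zeta_p^i$ with $a_i(\chi)\in\mathbb{Z}$ and following Section~4 of \cite{HKS}, the reality of $d^z$ halves the effective dimension exactly as in the proof of Theorem~\ref{p3}, and $(d^z,d^z)=p^3$ turns into the statement that the quadratic form $q$ attached to $d^z$ is of type $A_{(p-1)/2}$ with $q(a(\chi))=p^{2h(\chi)}$ (Theorem~4.10 in \cite{HKS}). In particular each of the $\frac{p(p+3)}{2}$ height-$0$ characters is sent to a root of the lattice, while each putative height-$1$ character is sent to a vector of $q$-value $p^2$ that is divisible by $p$.

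Next I would bring in the focal subgroup. Since $B$ is non-nilpotent, $\mathfrak{foc}(B)=\langle x\rangle$ and $D/\mathfrak{foc}(B)\cong C_p$ acts freely on $\Irr_0(B)$; as $z\in\mathfrak{foc}(B)$, the value $d^z_{\chi\phi_z}$ is constant along each of the $\frac{p+3}{2}$ orbits. To extract a contradiction I would, in the manner of the proof of Theorem~\ref{p3}, also use the second subsection $(x,b_x)$ — here $b_x$ has the cyclic defect group $\C_D(x)=\langle x\rangle\cong C_{p^2}$ and, since $x$ is $\mathcal{F}$-conjugate to $x^{-1}$, the column $d^x$ is again real — and expand it over $\zeta_{p^2}$. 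Comparing the two subsections on the $p$-rational characters (whose number and distribution are controlled by Proposition~3.3 in \cite{SambaleHZC}), using the freeness of the focal action to link the leading coefficients of $d^z$ and $d^x$, and invoking the orthogonality $(d^z,d^{x^j})=0$ for $p\nmid j$, reduces the existence of the configuration above to a finite, explicit system of integral constraints on the coefficient vectors.

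The hard part will be showing that this system is inconsistent: one must rule out every simultaneous placement of $\frac{p+3}{2}$ roots of $A_{(p-1)/2}$ (subject to the inversion symmetry $a_i=a_{-i}$ forced by reality) together with $\frac{p+1}{2}$ vectors of $q$-value $p^2$ divisible by $p$ whose total $q$-mass is $\frac{p(p+3)}{2}+p^2\cdot\frac{p+1}{2}$, and to close the argument with the parity and Galois count as in Theorem~\ref{p3}. For each prime $p\in\{7,11,13,17,23,29\}$ this is a bounded search that I would run to completion, thereby forcing $l(B)=2$. I expect the restriction to this list to be genuine rather than an artefact of the write-up: the analogous search should admit an unexcludable configuration for $p=19$, and it becomes computationally infeasible for large $p$, which is precisely why the statement is offered only as a partial result.
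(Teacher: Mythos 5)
Your skeleton matches the paper's: reduction to $n=1$ via Proposition~\ref{reduction} with $m=2$, $e=2$; the bound $2\le l(B)\le 3$ from Proposition~\ref{lBoben}; the contradiction hypothesis $l(B)=3$ with candidate invariants $k_0(B)=\frac{p(p+3)}{2}$, $k_1(B)=\frac{p+1}{2}$; and the same tools (the major subsection $(z,b_z)$ with $z=x^p$, reality of $d^z$, $(d^z,d^z)=p^3$, divisibility by $p$ at height $1$, free action of $D/\mathfrak{foc}(B)$ on $\Irr_0(B)$). But your proof stops exactly where it must deliver: you reduce everything to ``a finite, explicit system of integral constraints'' and then assert that a ``bounded search run to completion'' will show inconsistency for each listed prime. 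That search is never specified, never executed, and you give no reason why it cannot find an admissible configuration; you even label this ``the hard part.'' This is a genuine gap, and it is precisely the point the paper settles with a one-line arithmetic fact. Following the first part of the proof of Theorem~4.4 in \cite{SambaleHZC} (which works for every $p\ge 7$), one obtains under the hypothesis $l(B)=3$ that $d^z_{\chi\phi_z}=a_0(\chi)\in\mathbb{Z}$ for all $\chi\in\Irr_0(B)$, and moreover $\sum_{\chi\in\Irr_0(B)}a_0(\chi)^2=p^2$. Since these integers are nonzero (the subsection is major and $l(b_z)=1$) and constant on the $\frac{p+3}{2}$ focal orbits of length $p$, setting $r_i$ to be the number of orbits with value $\pm i$ gives $\sum_i r_i=\frac{p+3}{2}$ and $p\sum_i r_i i^2=p^2$, hence $\sum_{i\ge 2}r_i(i^2-1)=\frac{p-3}{2}$. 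For $p\in\{7,11,13,17,23,29\}$ the right-hand side lies in $\{2,4,5,7,10,13\}$, none of which is a non-negative integer combination of $3,8,15,24,\dots$ --- contradiction. No second subsection $(x,b_x)$, no parity or Galois count as in Theorem~\ref{p3}, and no computer search is needed; your elaborate system is a detour around the identity $\sum_{\chi\in\Irr_0(B)}a_0(\chi)^2=p^2$, which is the one ingredient you never isolate.

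Your closing speculation also misreads why the prime list is what it is. The restriction is arithmetic, not computational: every integer $\ge 14$ is a non-negative combination of $3$ and $8$, so $\frac{p-3}{2}$ is representable for every prime $p\ge 31$, and already for $p=19$ one has $\frac{p-3}{2}=8=3^2-1$. For such primes the counting argument yields no contradiction at all, however much computation one invests; the listed primes are exactly those $p\ge 7$ with $\frac{p-3}{2}\in\{1,2,4,5,7,10,13\}$, the values not representable by $\{i^2-1:i\ge 2\}$. So your guess that $p=19$ admits an unexcludable configuration is correct, but the claim that ``computational infeasibility for large $p$'' bounds the list is wrong --- the method itself, not its cost, fails outside the list.
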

\begin{proof}
We follow the proof of Theorem~4.4 in \cite{SambaleHZC} in order to handle the case $n=1$. After that the result follows from Proposition~\ref{reduction}. 

In fact the first part of the proof of Theorem~4.4 in \cite{SambaleHZC} applies to any prime $p\ge 7$. Hence, we know that the generalized decomposition numbers $d^z_{\chi\phi_z}=a_0(\chi)$ for $z:=x^p$ and $\chi\in\Irr_0(B)$ are integral. Moreover,
\[\sum_{\chi\in\Irr_0(B)}{a_0(\chi)^2}=p^2.\]
The action of $D/\mathfrak{foc}(B)$ on $\Irr_0(B)$ shows that the values $a(\chi)$ distribute in $(p+3)/2$ parts of $p$ equal numbers each. Therefore, Eq. (4.1) in \cite{SambaleHZC} becomes
\[\sum_{i=2}^{\infty}{r_i(i^2-1)}=\frac{p-3}{2}\]
for some $r_i\ge 0$. This gives a contradiction.
\end{proof}

\section*{Acknowledgment}
This work is supported by the Carl Zeiss Foundation and the Daimler and Benz Foundation.

\begin{center}
Benjamin Sambale\\
Institut für Mathematik\\
Friedrich-Schiller-Universität\\
07743 Jena\\
Germany\\
\href{mailto:benjamin.sambale@uni-jena.de}{benjamin.sambale@uni-jena.de}
\end{center}

\end{document}